\documentclass[a4paper]{article}

\usepackage[T1]{fontenc}
\usepackage[latin9]{inputenc}
\usepackage{geometry}
\geometry{verbose,tmargin=2.5cm,bmargin=2.5cm,lmargin=2.5cm,rmargin=2.5cm}
\setlength{\parskip}{\medskipamount}
\setlength{\parindent}{0pt}
\usepackage{amsmath}
\usepackage{amsthm}
\usepackage{amssymb}
\usepackage{bbm}
\usepackage{color}
\usepackage{paralist}
\usepackage[unicode=true,pdfusetitle,
 bookmarks=true,bookmarksnumbered=false,bookmarksopen=false,
 breaklinks=false,pdfborder={0 0 0},pdfborderstyle={},backref=false,colorlinks=false]
 {hyperref}


\makeatletter

\usepackage[nameinlink,capitalise,noabbrev]{cleveref}

\hypersetup{%
    bookmarksnumbered, bookmarksopen=true, bookmarksopenlevel=1,%
}

\theoremstyle{plain}
\newtheorem{thm}{Theorem}[section]
\crefname{thm}{Theorem}{Theorems}
\theoremstyle{plain}
\newtheorem{lem}[thm]{Lemma}
\crefname{lem}{Lemma}{Lemmas}
\theoremstyle{plain}

\theoremstyle{plain}
\newtheorem*{claim*}{Claim}
\crefname{claim}{Claim}{Claims}
\theoremstyle{definition}

\theoremstyle{plain}

\crefname{conjecture}{Conjecture}{Conjectures}
\theoremstyle{plain}

\crefname{prop}{Proposition}{Propositions}
\theoremstyle{definition}

\theoremstyle{definition}

\theoremstyle{plain}

\crefname{fact}{Fact}{Facts}
\crefformat{equation}{#2(#1)#3}

\crefname{subsection}{Subsection}{Subsections}

\date{}

\crefformat{equation}{#2(#1)#3}

\let\originalleft\left
\let\originalright\right
\renewcommand{\left}{\mathopen{}\mathclose\bgroup\originalleft}
\renewcommand{\right}{\aftergroup\egroup\originalright}
\usepackage{verbatim}

\makeatletter
\renewcommand*{\UrlTildeSpecial}{%
  \do\~{%
    \mbox{%
      \fontfamily{ptm}\selectfont
      \textasciitilde
    }%
  }%
}%
\let\Url@force@Tilde\UrlTildeSpecial
\makeatother

\let\OLDthebibliography\thebibliography
\renewcommand\thebibliography[1]{
  \OLDthebibliography{#1}
  \setlength{\parskip}{0pt}
  \setlength{\itemsep}{3pt plus 0.3ex}
}

\makeatother

\allowdisplaybreaks
\numberwithin{equation}{section}

\begin{document}
\global\long\def\per{\operatorname{per}}%
\global\long\def\RR{\mathbb{R}}%
\global\long\def\FF{\mathbb{F}}%
\global\long\def\QQ{\mathbb{Q}}%
\global\long\def\E{\mathbb{E}}%
\global\long\def\Var{\operatorname{Var}}%
\global\long\def\Cov{\operatorname{Cov}}%
\global\long\def\CC{\mathbb{C}}%
\global\long\def\NN{\mathbb{N}}%
\global\long\def\ZZ{\mathbb{Z}}%
\global\long\def\GG{\mathbb{G}}%
\global\long\def\tallphantom{\vphantom{\sum}}%
\global\long\def\tallerphantom{\vphantom{\int}}%
\global\long\def\supp{\operatorname{supp}}%
\global\long\def\one{\mathbbm{1}}%
\global\long\def\d{\operatorname{d}}%
\global\long\def\Unif{\operatorname{Unif}}%
\global\long\def\Po{\operatorname{Po}}%
\global\long\def\Bin{\operatorname{Binomial}}%
\global\long\def\Ber{\operatorname{Bernoulli}}%
\global\long\def\Geom{\operatorname{Geom}}%
\global\long\def\Rad{\operatorname{Rad}}%
\global\long\def\floor#1{\left\lfloor #1\right\rfloor }%
\global\long\def\ceil#1{\left\lceil #1\right\rceil }%
\global\long\def\falling#1#2{\left(#1\right)_{#2}}%
\global\long\def\cond{\,\middle|\,}%
\global\long\def\su{\subseteq}%
\global\long\def\spn{\operatorname{span}}%
\global\long\def\eps{\varepsilon}%
\global\long\def\one{\boldsymbol{1}}%

\global\long\def\af#1{\textcolor{orange}{\textbf{[AF comments:} #1\textbf{]}}}
\global\long\def\ls#1{\textcolor{blue}{\textbf{[LS comments:} #1\textbf{]}}}
\global\long\def\mk#1{\textcolor{red}{\textbf{[MK comments:} #1\textbf{]}}}

\title{\texorpdfstring{\vspace{-1cm}}{}Singularity of sparse random matrices: simple proofs}
\author{Asaf Ferber\thanks{Department of Mathematics, University of California, Irvine.
Email: \href{mailto:asaff@uci.edu} {\nolinkurl{asaff@uci.edu}}.
Research supported in part by NSF Awards DMS-1954395 and DMS-1953799.}
\and 
Matthew Kwan\thanks{Department of Mathematics, Stanford University, Stanford, CA.
Email: \href{mattkwan@stanford.edu}{\nolinkurl{mattkwan@stanford.edu}}.
Research supported by NSF Award DMS-1953990.}
\and
Lisa Sauermann\thanks{School of Mathematics, Institute for Advanced Study, Princeton, NJ. Email: \href{lsauerma@mit.edu}{\nolinkurl{lsauerma@mit.edu}}. Research supported by NSF Grant CCF-1900460 and NSF Award DMS-2100157.}
}

\maketitle

\begin{abstract}
\noindent
Consider a random $n\times n$ zero-one matrix with ``sparsity'' $p$, sampled according to one of the following two models: either every entry is independently taken to be one with probability $p$ (the ``Bernoulli'' model), or each row is independently uniformly sampled from the set of all length-$n$ zero-one vectors with exactly $pn$ ones (the ``combinatorial'' model). We give simple proofs of the (essentially best-possible) fact that in both models, if $\min(p,1-p)\geq (1+\varepsilon)\log n/n$ for any constant $\varepsilon>0$, then our random matrix is nonsingular with probability $1-o(1)$. In the Bernoulli model this fact was already well-known, but in the combinatorial model this resolves a conjecture of Aigner-Horev and Person.
\end{abstract}

\section{Introduction}

Let $M$ be an $n\times n$ random matrix with i.i.d.\ $\Ber(p)$
entries (meaning that each entry $M_{ij}$ satisfies $\Pr(M_{ij}=1)=p$
and $\Pr(M_{ij}=0)=1-p$). It is a famous theorem of Koml\'os\ \cite{Kom67,Kom68}
that for $p=1/2$ a random Bernoulli matrix is \emph{asymptotically
almost surely} nonsingular: that is, $\lim_{n\to\infty}\Pr(M\text{ is singular})=0$.
Koml\'os' theorem can be generalised to sparse random Bernoulli matrices
as follows.
\begin{thm}
\label{thm:ber}Fix  $\varepsilon>0$, and let $p=p(n)$
be any function of $n$ satisfying $\min(p,1-p)\geq (1+\varepsilon)\log n/n$.
Then for a random $n\times n$ random matrix $M$ with i.i.d.\ $\Ber(p)$
entries, we have
\[\lim_{n\to\infty}\Pr(M\text{ is singular})=0.\]
\end{thm}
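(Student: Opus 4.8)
The plan is to bound the probability that $M$ has a nonzero kernel vector. Such a vector $v$, which we normalise by $\|v\|_\infty=1$, satisfies $\langle R_i,v\rangle=0$ for every row $R_i$ of $M$, and $R_1,\dots,R_n$ are i.i.d. We may assume $p\le 1/2$, so $\min(p,1-p)=p\ge(1+\eps)\log n/n$; the case $p>1/2$ reduces to this by writing $\langle R_i,v\rangle=\bigl(\sum_j v_j\bigr)-\langle\one-R_i,v\rangle$ and running the same argument with the sparse vectors $\one-R_i$ and a shifted target value. The event that $M$ has an all-zero row or column has probability at most $2n(1-p)^n\le 2n^{-\eps}=o(1)$, so we discard it, and then split the rest according to $T:=\supp(v)$ (so $|T|\ge 2$ after the discard).

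\emph{Small support.} If $Mv=0$ then no row of $M$ can meet $T$ in exactly one coordinate — that would force an entry of $v$ to vanish — so the probability of a null vector with $|T|=t$ is at most $\binom nt\bigl(1-tp(1-p)^{t-1}\bigr)^n$. For $t=o(n/\log n)$ one has $(1-p)^{t-1}=1-o(1)$, so this is at most $\binom nt e^{-(1-o(1))tpn}\le n^{-(\eps-o(1))t}$, and summing over $t\ge 2$ gives $o(1)$. This is exactly where the hypothesis $(1+\eps)\log n/n$, rather than $\log n/n$, is used: it is the density above which every not-too-large coordinate set is typically hit exactly once by some row. When $p$ is instead bounded away from $0$ this estimate is wasteful for $t\gtrsim\log n$, and one replaces it by the observation that the columns $\{C_j:j\in T\}$ are then too many i.i.d.\ vectors in $\RR^n$ to be dependent: a $\Ber(p)^n$ vector lies in a fixed subspace of codimension $k$ with probability at most $(1-p)^k$ (on a suitable set of $k$ coordinates the rest of the vector determines those, each matching with probability $\le\max(p,1-p)$), so $\Pr(\text{dependent})\le|T|(1-p)^{n-|T|+1}$ and $\binom n{|T|}|T|(1-p)^{n-|T|+1}=o(1)$ for $|T|$ below a small constant fraction of $n$. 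For $p$ bounded away from $0$ and $1$ these two estimates already cover all $T$, which is Koml\'os's theorem in miniature.

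\emph{Large support — the crux.} What remains, and what genuinely uses sparsity, is the regime of moderately-large-to-large support (for small $p$, essentially $|T|$ from about $n/\log^2 n$ up to $n$). Here one needs true anticoncentration: for a fixed $v$ that is spread out enough — e.g.\ incompressible, so with $\Omega(n)$ coordinates of size $\Omega(n^{-1/2})$ — a symmetrisation $R\mapsto R-R'$ together with the $\pm1$ Erd\H os--Littlewood--Offord inequality, applied to the $\Theta(p|\supp v|)$ coordinates of $\supp v$ that a row sees, gives $\Pr(\langle R,v\rangle=0)\le 1-c$ (indeed $(\log n)^{-\Omega(1)}$ when $p$ is smallest), so $\Pr(Mv=0)$ is exponentially small for each such fixed $v$. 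The obstacle is passing from a fixed $v$ to all relevant $v$. In the dense case one takes a fine net of the sphere, but this breaks down for sparse $M$: the variance of $\langle R,v\rangle$ is $p(1-p)=o(1)$, so $\langle R,v\rangle$ is confined to an interval of length $O(\sqrt p)$ and the per-row small-ball probability is bounded away from $1$ only at scales $\ll\sqrt p$; at that fine a scale the net has size $e^{(1+o(1))n}$, which the per-row anticoncentration — at most $(\log n)^{-\Omega(1)}$, since a sparse row sees only $\Theta(\log n)$ coordinates — cannot beat, a mismatch of $(n/\log n)^{\Theta(n)}$. Closing this gap is the heart of the proof: one must either show that the null vectors surviving the earlier cases have no additive structure (an inverse-Littlewood--Offord step), or — the route I would attempt — use that each sparse row constrains only $O(\log n)$ coordinates to first expose a carefully chosen sub-collection of rows, argue that with probability $1-o(1)$ these already confine $v$ to a subspace of dimension $o(n)$ (or force $T$ to be atypically structured, where a union bound applies), and only then run a net in that much smaller space, iterating the reduction if necessary. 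This reduction is where I expect essentially all the difficulty to lie; the moderately-sparse range (support between $n/\log^2 n$ and a small constant fraction of $n$, where none of the earlier estimates apply directly) is handled inside the same framework, by combining the combinatorial union bound with anticoncentration applied to the large coordinates of $v$.
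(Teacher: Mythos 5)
Your proposal does not prove the theorem: you yourself flag the large-support (spread) regime as ``the crux'' and ``where I expect essentially all the difficulty to lie,'' and for it you offer only speculative strategies (an inverse Littlewood--Offord step, or an iterated row-exposure/dimension-reduction followed by a net) without carrying any of them out. That is precisely the missing content. Moreover, the difficulty you describe --- that a union bound over a net of spread vectors cannot beat per-row anticoncentration of order $(\log n)^{-\Omega(1)}$ --- is an artifact of insisting on union-bounding over all spread vectors. The paper's route avoids this entirely: by Markov's inequality applied to the number of rows lying in the span of the other $n-1$ rows (using only that the rows are i.i.d.), the singularity probability is controlled, up to a factor $n/t$, by what happens for a \emph{single} vector $x$ chosen after exposing $R_1,\dots,R_{n-1}$, namely any nonzero $x$ orthogonal to those rows; one then needs anticoncentration of $x\cdot R_n$ for that one vector only, at the single value $0$, which the Erd\H{o}s--Littlewood--Offord bound gives as $O(1/\sqrt{|\supp(x)|\,p^*})=O(1/\sqrt{\log n})$ once $|\supp(x)|\geq cn$. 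The only union bound required is the discrete one showing that, with probability $1-o(1)$, every nonzero vector orthogonal to $R_1,\dots,R_{n-1}$ has support at least $cn$: reduce such a vector to a nonzero $v\in\ZZ_2^n$ with $|\supp(v)|<cn$, note that $R_i\cdot v$ is $\Bin(s,p)$ and is even with probability $\tfrac12+\tfrac12(1-2p)^s$, and union bound over the at most $\binom{n}{s}$ choices of $v$. No net, no inverse theorem, and no structure analysis of spread vectors is needed.

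Two smaller points. Your small-support estimate (no row meets $\supp(v)$ in exactly one coordinate) is correct and is a valid alternative, for the right kernel, to the paper's mod-$2$ argument in that range; but in the paper the mod-$2$ lemma does double duty, also certifying that kernel vectors of the $(n-1)\times n$ matrix are spread, a step your framework has no substitute for. And your aside that for $p$ bounded away from $0$ and $1$ the two elementary estimates ``already cover all $T$'' is false: the column-dependence bound $\binom{n}{|T|}|T|(1-p)^{n-|T|+1}$ is useless once $|T|$ exceeds a small constant fraction of $n$, so the dense case is not disposed of that easily either.
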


\cref{thm:ber} is best-possible, in the sense that if $\min(p,1-p) \le(1-\varepsilon)\log n/n$,
then we actually have $\lim_{n\to\infty}\Pr(M\text{ is singular})=1$
(because, for instance, $M$ is likely to have two identical columns). That
is to say, $\log n/n$ is a \emph{sharp threshold} for singularity.
It is not clear when \cref{thm:ber} first appeared in print, but strengthenings
and variations on \cref{thm:ber} have been proved by several different
authors (see for example \cite{AE14,BR18,CV08,CV10}).

Next, let $Q$ be an $n\times n$ random matrix with independent rows,
where each row is sampled uniformly from the subset of vectors in
$\{ 0,1\} ^{n}$ having exactly $d$ ones ($Q$ is said
to be a random \emph{combinatorial} matrix). The study of such matrices was initiated by Nguyen\ \cite{Ngu13}, who proved that if $d=n/2$ then $Q$ is asymptotically almost surely
nonsingular (where $n\to\infty$ along the even integers). Strengthenings of Nguyen's theorem have been proved by several authors; see for example \cite{AP20,FJLS,Jai19,JSS20,Tra20}. Recently, Aigner-Horev and Person\ \cite{AP20} conjectured an analogue of \cref{thm:ber} for sparse random combinatorial matrices, which we prove in this note.
\begin{thm}
\label{thm:comb}Fix $\varepsilon>0$, and let $d=d(n)$
be any function of $n$ satisfying $\min(d,n-d)\geq (1+\varepsilon)\log n$.
Then for a $n\times n$ random zero-one matrix $Q$ with independent rows,
where each row is chosen uniformly among the vectors with $d$ ones, we have
\[
\lim_{n\to\infty}\Pr(Q\text{ is singular})\to0.
\]
\end{thm}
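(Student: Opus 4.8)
The plan is to fix a prime $p=p(n)$ and prove the stronger statement that $Q$ is nonsingular \emph{over $\FF_p$} with probability $1-o(1)$; this suffices because $\det Q\in\ZZ$, so $\det Q=0$ forces $\det Q\equiv 0\pmod p$. I would take $p$ to be the least prime exceeding $\log n$ that does not divide $d$; the prime number theorem gives $\log n<p\le(\log n)^2$ for large $n$, so $p\to\infty$, $p=n^{o(1)}$, and $p\nmid d$. The role of $p\nmid d$ is to kill the only ``obvious'' near‑obstruction: every row $R\su[n]$ (identified with its indicator vector, so that $\langle R,u\rangle=\sum_{i\in R}u_i$) satisfies $\langle R,c\one\rangle=cd\neq 0$ in $\FF_p$ for $c\neq 0$, hence no nonzero multiple of $\one$ lies in the kernel.

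First I would pass to a first‑moment bound. Since $\ker_{\FF_p}Q$ is a subspace, it has at least $p$ elements whenever $Q$ is singular, so $\one[Q\text{ singular}]\le(|\ker Q|-1)/(p-1)$ and hence
\[
\Pr(Q\text{ singular over }\FF_p)\ \le\ \frac{1}{p-1}\sum_{u\in\FF_p^{\,n}\setminus\{0\}}\Pr(Qu=0).
\]
As $p\to\infty$, it suffices to show this sum is $O_\varepsilon(1)$. The rows $R_1,\dots,R_n$ are i.i.d., so $\Pr(Qu=0)=q_u^{\,n}$ with $q_u=\Pr_R(\langle R,u\rangle=0)$ for a uniform random $d$‑subset $R$, and $q_u$ depends only on the \emph{profile} $\pi_u=(n_0,\dots,n_{p-1})$ of $u$ (where $n_j=|\{i:u_i=j\}|$), since the law of $\sum_{i\in R}u_i$ is invariant under permuting coordinates. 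Writing $q(\pi)$ for this common value, $\sum_{u\neq 0}q_u^{\,n}=\bigl(\sum_{\pi}\binom{n}{n_0,\dots,n_{p-1}}q(\pi)^{\,n}\bigr)-1$, and everything reduces to estimating this sum over profiles, which I would split into a ``structured'' regime and a ``bulk'' regime.

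For \emph{structured} profiles — where some value class is huge, say $\max_j n_j\ge n-n/\log n$, so that $u$ has small support $T$ — elementary first‑moment estimates suffice. A single row either misses $T$ entirely (probability $(1-|T|/n)^{d}$, which forces $\langle R,u\rangle=0$) or meets it; but if it meets $T$ in exactly one point then $\langle R,u\rangle\neq 0$, since the entries of $u$ on $T$ are nonzero. Hence $q_u\le 1-\Pr(|R\cap T|=1)$: for moderate $d$ this yields $q_u^{\,n}\le e^{-(1-o(1))|T|d}$, while for large $d$ the row meets $T$ in many points and $\langle R,u\rangle$ becomes anti‑concentrated, so $q_u$ is small (at most $O(1/p)$). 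In either case, since $e^{d}\ge n^{1+\varepsilon}$ leaves room to absorb the $\le(pn)^{|T|}$ choices of $T$ and of the values of $u$ on $T$, the structured profiles contribute $o(1)$; interpolating between the two behaviours is routine but mildly tedious. The crux is the \emph{bulk} regime (all $n_j$ within, say, $\sqrt n\log n$ of $n/p$, plus the intermediate profiles that are imbalanced but with no giant class), which needs a sharp single‑row anti‑concentration estimate: $q(\pi)$ is \emph{extremely} close to $1/p$. Here I would use the Fourier identity $q(\pi)=\tfrac1p\sum_{t\in\FF_p}\binom nd^{-1}e_d\bigl(\omega^{tu_1},\dots,\omega^{tu_n}\bigr)$ with $\omega=e^{2\pi i/p}$ and $e_d$ the $d$‑th elementary symmetric polynomial. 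For the uniform profile and $t\neq 0$ (with $p$ odd) one has $\prod_j(1+x\omega^{tj})^{n/p}=(1+x^{p})^{n/p}$, which has no $x^{d}$‑coefficient because $p\nmid d$; so $q(\text{uniform})=1/p$ exactly. For a nearby profile, writing $\prod_j(1+x\omega^{tj})^{n_j}=(1+x^{p})^{n/p}\prod_j(1+x\omega^{tj})^{n_j-n/p}$ and extracting the $x^{d}$‑coefficient bounds $|e_d(\cdots)|/\binom nd$ by a quantity that vanishes at the uniform profile and — crucially — is super‑polynomially small in $n$ for bulk profiles, because $\min(d,n-d)\ge(1+\varepsilon)\log n$ forces $\binom nd\ge n^{(1+\varepsilon-o(1))\log n}$. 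Thus $q(\pi)=1/p+o\bigl(1/(pn)\bigr)$, so $\binom{n}{n_0,\dots,n_{p-1}}q(\pi)^{\,n}\le(1+o(1))\,p^{-n}\binom{n}{n_0,\dots,n_{p-1}}$, and summing over $\pi$ via $\sum_\pi\binom{n}{n_0,\dots,n_{p-1}}=p^{n}$ gives a contribution $1+o(1)$ (the intermediate profiles go the same way, the gain now coming from $\binom{n}{n_0,\dots,n_{p-1}}$ being correspondingly smaller). Altogether $\sum_{u\neq 0}q_u^{\,n}=1+o(1)$, whence $\Pr(Q\text{ singular})\le(1+o(1))/(p-1)\to 0$.

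I expect the main obstacle to be exactly that last estimate: propagating the \emph{exact} vanishing $q(\text{uniform})=1/p$ into a bound $q(\pi)=1/p+o(1/(pn))$ that holds uniformly over all profiles in the bulk — i.e.\ controlling $e_d$ of near‑balanced tuples of $p$‑th roots of unity — together with two subsidiary but delicate points: choosing $p$ in the sweet spot (large enough that $1/(p-1)\to 0$, small enough that a single combinatorial row is genuinely anti‑concentrated to within $\approx 1/p$), and drawing the line between the structured and bulk regimes so that each side remains tractable.
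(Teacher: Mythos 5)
Your overall strategy (bound $\Pr(Q\text{ singular})$ by $\frac{1}{p-1}\sum_{u\neq 0}\Pr(Qu=0)$ over $\FF_p$ for a prime $p\approx\log n$ with $p\nmid d$) is genuinely different from the paper's route via \cref{lem:general}, and the skeleton is sound: the reduction mod $p$, the existence of such a prime, and the exact identity $q(\text{uniform})=1/p$ are all fine. But the proof has a genuine gap exactly at the place you flag as the ``main obstacle'', and it is not a routine one. Your bulk estimate (via $e_d$ of roots of unity and the factor $(1+x^p)^{a}$) does give $q(\pi)=1/p+o(1/(pn))$ when every $n_j$ is within $\sqrt n\log n$ of $n/p$, but it degrades completely as soon as the profile is imbalanced: if some residue class is empty or small, the block factor $(1+x^p)^{a}$ carries little weight and the coefficient bound $|e_d|\le\sum_k\binom{a}{k}\binom{m}{d-pk}$ becomes trivial. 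For these ``intermediate'' profiles the assertion that ``the gain now comes from the multinomial coefficient'' is precisely the hard part: you must show, for every profile, that $n\log\bigl(p\,q(\pi)\bigr)$ is beaten by the entropy deficit of the profile, which amounts to a quantitative anti-concentration estimate for $\langle R,u\rangle$ (a sum of $d$ draws \emph{without replacement} from an arbitrary multiset of residues) valid at all scales: roughly $q(\pi)\le e^{-c\min(\lambda,1)}$ when the effective number of ``active'' draws $\lambda$ is small, $q(\pi)=O(1/\sqrt\lambda)$ in the middle range, and saturation at $(1+o(1/n))/p$ near uniformity, with a smooth interpolation in between. For example, two-fibre vectors with small fibre of size $s\approx n/\log n$ have $q_u$ as large as a constant (a hypergeometric point mass at its mode), and only the refined, regime-dependent bound defeats the $\binom ns p^2$ count; none of this is supplied, and it is the bulk of the work in first-moment computations of this type (compare the rank/cokernel literature cited in the paper). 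In contrast, the paper's \cref{lem:general} is designed to avoid ever needing single-row estimates sharper than the crude $O(\sqrt{n/(sd)})$ of \cref{lem:comb-LO}.

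A second, smaller gap: your ``structured'' case conflates ``some value class is huge'' with ``small support''. If the giant fibre has a \emph{nonzero} value $c$, then $\supp(u)$ is not small; writing $u=c\one+u'$ with $u'$ of small support $T$, a row missing $T$ gives $\langle R,u\rangle=cd\neq0$ (good), while a row meeting $T$ in one point \emph{can} give zero (your stated argument breaks), so this case needs its own estimate, e.g.\ $q_u\le\Pr(R\cap T\neq\emptyset)$ for small $|T|$ together with anti-concentration when $|T|d/n$ is large --- again the same interpolation issue as above. Also, the manipulation $\prod_j(1+x\omega^{tj})^{n_j-n/p}$ with possibly negative exponents should be replaced by factoring out $(1+x^p)^{\min_j n_j}$. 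So: an interesting and plausibly completable alternative route, but as written the decisive estimates (uniform near-exact equidistribution plus the intermediate-profile anti-concentration) are asserted rather than proved, and they constitute most of the difficulty.
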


Just like \cref{thm:ber}, \cref{thm:comb} is best-possible in the sense that if $\min(d,n-d) \le(1-\varepsilon)\log n$,
then $\lim_{n\to\infty}\Pr(M\text{ is singular})=1$. \cref{thm:comb} improves on a result of Aigner-Horev and Person: they proved the same fact under the assumption that $\lim_{n\to \infty} d/(n^{1/2}\log^{3/2}n)=\infty$ (assuming that $d\le n/2$). 

The structure of this note is as follows. First, in \cref{sec:general} we prove a simple and general lemma (\cref{lem:general}) which applies to any random matrix with i.i.d.\ rows. This lemma distills the essence of (a special case of) an argument due to Rudelson and Vershinyn~\cite{RV08}. Essentially, it shows that in order to prove \cref{thm:ber} and \cref{thm:comb}, one just needs to prove some relatively crude estimates about the typical structure of the vectors in the left and right kernels of our random matrices.

Then, in \cref{sec:ber} and \cref{sec:comb} we show how to use \cref{lem:general} to give simple proofs of \cref{thm:ber} and \cref{thm:comb}. Of course, \cref{thm:ber} is not new, but its proof is extremely simple and it serves as a warm-up for \cref{thm:comb}. It turns out that in order to analyse the typical structure of the vectors in the left and right kernel, we can work over $\ZZ_q$ for some small integer $q$ (in fact, we can mostly work over $\ZZ_2$). This idea is not new (see for example, \cite{AP20,CMMM19,Fer20,FJ19,FJLS,Hua18,Mes20,NW18a,NW18b}), but the details here are much simpler.

We remark that with a bit more work, the methods in our proofs can also likely be used to prove the conclusions of \cref{thm:ber} and \cref{thm:comb} under the weaker (and strictly best-possible) assumptions that $\lim_{n\to \infty}(\min(pn,n-pn)-\log n)=\infty$ and $\lim_{n\to \infty}(\min(d,n-d)-\log n)=\infty$. However, in this note we wish to emphasise the simple ideas in our proofs and do not pursue this direction.

\textit{Notation.} All logarithms are to base $e$. We use common asymptotic notation, as follows. For real-valued functions $f(n)$ and $g(n)$, we write $f=O(g)$ to mean that there is some constant $C>0$ such that $\vert f\vert \leq Cg$. If $g$ is nonnegative, we write $f=\Omega(g)$ to mean that there is $c>0$ such that $f \geq cg$ for sufficiently large $n$. We write $f=o(g)$ to mean that $f(n)/g(n)\to 0$ as $n\to\infty$.

\textit{Acknowledgements.} We would like to thank Elad Aigner-Horev, Yury Person, and the anonymous referee, for helpful comments and suggestions.

\section{A general lemma}
\label{sec:general}

In this section we prove a (very simple) lemma which will give us
a proof scheme for both \cref{thm:ber} and \cref{thm:comb}. For a
vector $x$, let $\supp(x)$ (the \emph{support} of $x$)
be the set of indices $i$ such that $x_{i}\ne0$.
\begin{lem}
\label{lem:general}Let $\FF$ be a field, and let $A\in\FF^{n\times n}$
be a random matrix with i.i.d.\ rows $R_{1},\dots,R_{n}$. Let $\mathcal{P}\subseteq\FF^{n}$
be any property of vectors in $\FF^{n}$. Then for any $t\in\RR$,
the probability that $A$ is singular is upper-bounded by
\begin{align}
 & \Pr(x^{T}A=0\text{ for some nonzero }x\in\FF^{n}\text{ with }|\supp(x)|<t)\label{eq:small-supp}\\
 & \qquad+\frac{n}{t}\Pr(\text{there is nonzero }x\notin\mathcal{P}\text{ such that }x\cdot R_{i}=0\text{ for all }i=1,\dots,n-1)\label{eq:P}\\
 & \qquad+\frac{n}{t}\sup_{x\in\mathcal{P}}\Pr(x\cdot R_{n}=0)\label{eq:LO}
\end{align}
\end{lem}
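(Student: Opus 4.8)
The plan is to pass from singularity of $A$ to the combinatorial structure of its left kernel, and then to reduce to a statement about a single row by a symmetrisation argument. First, if $A$ is singular then so is $A^{T}$, so there is a nonzero $x\in\FF^{n}$ with $x^{T}A=0$; equivalently $\sum_{i=1}^{n}x_{i}R_{i}=0$. If some such $x$ has $|\supp(x)|<t$, then we are inside the event in \eqref{eq:small-supp}. Otherwise every nonzero $x$ with $x^{T}A=0$ has $|\supp(x)|\ge t$; in that case fix one such $x$.

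For $1\le i\le n$, let $E_{i}$ be the event that $R_{i}\in\spn\{R_{j}:j\ne i\}$. If $\sum_{j}x_{j}R_{j}=0$ with $x\ne 0$, then for each $i\in\supp(x)$ we may solve for $R_{i}$, so $E_{i}$ holds; hence at least $|\supp(x)|\ge t$ of the events $E_{1},\dots,E_{n}$ occur. By Markov's inequality, and since the rows are exchangeable (so that $\Pr(E_{i})=\Pr(E_{n})$ for all $i$),
\[
\Pr\bigl(A\text{ is singular, but no }x\ne 0\text{ with }x^{T}A=0\text{ has }|\supp(x)|<t\bigr)\ \le\ \frac{1}{t}\sum_{i=1}^{n}\Pr(E_{i})\ =\ \frac{n}{t}\,\Pr(E_{n}).
\]
Together with the contribution of \eqref{eq:small-supp}, it therefore suffices to prove $\Pr(E_{n})\le\Pr(\text{the event in }\eqref{eq:P})+\max_{x\in\mathcal{P}}\Pr(x\cdot R_{n}=0)$.

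To prove this, condition on $R_{1},\dots,R_{n-1}$ and put $V=\spn\{R_{1},\dots,R_{n-1}\}$. Then $\dim V\le n-1$, so the orthogonal complement $V^{\perp}=\{x\in\FF^{n}:x\cdot R_{j}=0\text{ for }j=1,\dots,n-1\}$ contains a nonzero vector. If some nonzero $x\in V^{\perp}$ is not in $\mathcal{P}$, then the event in \eqref{eq:P} occurs (and this event is determined by $R_{1},\dots,R_{n-1}$). In the remaining case, every nonzero vector of $V^{\perp}$ lies in $\mathcal{P}$; pick one such $x$, which is a function of $R_{1},\dots,R_{n-1}$ and hence independent of $R_{n}$. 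Since $E_{n}$ forces $R_{n}\in V$ and therefore $x\cdot R_{n}=0$,
\[
\Pr(E_{n}\mid R_{1},\dots,R_{n-1})\ \le\ \Pr(x\cdot R_{n}=0\mid R_{1},\dots,R_{n-1})\ =\ \Pr(x\cdot R_{n}=0)\ \le\ \max_{x'\in\mathcal{P}}\Pr(x'\cdot R_{n}=0).
\]
Averaging over $R_{1},\dots,R_{n-1}$ and combining the two cases gives the desired bound on $\Pr(E_{n})$, and assembling everything proves the lemma.

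The argument is entirely elementary, so there is no real obstacle; the one point that needs care is this last conditioning step, where one must choose the witnessing vector $x\in V^{\perp}\cap\mathcal{P}$ as a function of $R_{1},\dots,R_{n-1}$ alone, so that $R_{n}$ is still independent of it when we compute $\Pr(x\cdot R_{n}=0)$. (There is no loss in assuming $t>0$: for $0<t\le 1$ the event in \eqref{eq:small-supp} is empty while $n/t\ge n$, and for $t\le 0$ the asserted inequality is trivial.)
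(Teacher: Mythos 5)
Your proof is correct and follows essentially the same route as the paper: bound the large-support case via Markov's inequality applied to the number of rows lying in the span of the others (using that the rows are i.i.d.\ so each event has the same probability), then condition on $R_1,\dots,R_{n-1}$, pick a nonzero vector orthogonal to them, and split according to whether it lies in $\mathcal{P}$. The only difference is that you spell out the measurable choice of the witnessing kernel vector, which the paper leaves implicit.
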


\begin{proof}
Note that $A$ is singular if and only if there is a nonzero $x\in\FF^{n}$
satisfying $x^{T}A=0$. Let $\mathcal{E}_{i}$ be the event that $R_{i}\in\spn\{ R_{1},\dots,R_{i-1},R_{i+1},\dots,R_{n}\} $,
and let $X$ be the number of $i$ for which $\mathcal{E}_{i}$ holds.
Then by Markov's inequality and the assumption that the rows $R_{1},\dots,R_{n}$ are i.i.d., we have
\[
\Pr\left(x^{T}M=0\text{ for some }x\text{ with }|\supp\left(x\right)|\ge t\right)\le\Pr(X\ge t)\le\frac{\E X}{t}=\frac{n}{t}\Pr(\mathcal{E}_{n}).
\]
It now suffices to show that $\frac{n}{t}\Pr(\mathcal{E}_{n})$ is upper-bounded by the sum of the terms \cref{eq:P} and \cref{eq:LO}. Note that we can always choose a nonzero vector $x\in \FF^n$ with $x\cdot R_{i}=0$ for $i=1,\dots,n-1$. We interpret $x$ as a random vector depending on $R_1,\dots,R_{n-1}$ (but not $R_n$). If the event $\mathcal{E}_{n}$ occurs, we must have $x\cdot R_n=0$, so
$$\frac n t\Pr(\mathcal{E}_{n})\le \frac n t\Pr(x\notin \mathcal P)+\frac n t\Pr(x\cdot R_n=0\,|\,x\in \mathcal P).$$
Then $\frac n t\Pr(x\notin \mathcal P)$ is upper-bounded by the expression in \cref{eq:P}, and, since $x$ and $R_n$ are independent, $\frac n t\Pr(x\cdot R_n=0\,|\,x\in \mathcal P)$ is upper-bounded by the expression in \cref{eq:LO}.
\end{proof}

\section{Singularity of sparse Bernoulli matrices: a simple
proof}
\label{sec:ber}

Let us fix $0<\eps<1$. We will take $t=cn$ for some small constant $c$ (depending on $\varepsilon$), and let $\mathcal{P}$
be the property $\{ x\in\QQ^{n}:|\supp (x)|\geq t\} $.
All we need to do is to show that the three terms \cref{eq:small-supp},
\cref{eq:P} and \cref{eq:LO} in \cref{lem:general} are each of the form $o(1)$. The following
lemma is the main part of the proof.

\begin{lem}
\label{lem:ber-normal}Let $R_{1},\dots,R_{n-1}$ be the first $n-1$
rows of a random $\Ber(p)$ matrix, with $\min(p,1-p)\geq (1+\varepsilon)\log n/n$.
There is $c>0$ (depending only on $\varepsilon$) such that with probability
$1-o(1)$, no nonzero vector $x\in\QQ^{n}$ with $|\supp(x)|<cn$
satisfies $R_{i}\cdot x=0$ for all $i=1,\dots,n-1$.
\end{lem}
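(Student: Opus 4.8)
The plan is to first reduce the statement over $\QQ$ to the analogous statement over $\ZZ_2$, and then prove the $\ZZ_2$ statement by a union bound over supports, split into two regimes according to the size of the support.

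For the reduction: suppose some nonzero $x\in\QQ^n$ with $|\supp(x)|<cn$ satisfies $R_i\cdot x=0$ for all $i\le n-1$. Clearing denominators and dividing out the greatest common divisor of the entries produces a \emph{primitive} integer vector $y$ (i.e.\ with $\gcd$ of entries equal to $1$) with $\supp(y)=\supp(x)$ and $R_i\cdot y=0$ over $\ZZ$; reducing modulo $2$ gives $\bar y\in\ZZ_2^{\,n}$ which is nonzero (as $y$ is primitive), has $\supp(\bar y)\su\supp(y)$, and satisfies $R_i\cdot\bar y\equiv 0\pmod 2$. So it suffices to show that with probability $1-o(1)$ there is no nonzero $x\in\ZZ_2^{\,n}$ with $|\supp(x)|<cn$ and $R_i\cdot x\equiv 0\pmod2$ for all $i=1,\dots,n-1$.

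Now for the union bound. For each $1\le k<cn$ there are exactly $\binom nk$ vectors in $\ZZ_2^{\,n}$ with support of size $k$, and for any such $x$, since the rows are i.i.d.\ and independent, $\Pr(R_i\cdot x\equiv0\ \forall i)=\rho_k^{\,n-1}$ where $\rho_k:=\Pr(\Bin(k,p)\text{ even})=\tfrac12\bigl(1+(1-2p)^k\bigr)$. Since $\tfrac12(1+(1-2p)^k)\le\tfrac12(1+|1-2p|^k)$ and $|1-2p|=1-2\min(p,1-p)$, we may assume $p\le 1/2$; then $1-2p\le e^{-2p}$ gives $\rho_k\le 1-\tfrac12(1-e^{-2pk})$ and hence $\rho_k^{\,n-1}\le\exp\!\bigl(-\tfrac{n-1}2(1-e^{-2pk})\bigr)$. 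Set $K:=\lfloor\varepsilon/(10p)\rfloor$. When $1\le k\le K$ we have $2pk\le\varepsilon/5$, so $1-e^{-2pk}\ge 2pk(1-pk)\ge(1-\varepsilon/10)\,2pk$, and therefore (using $p(n-1)\ge(1+\varepsilon)(1-1/n)\log n\ge(1+\varepsilon/2)\log n$ for large $n$) we get $\rho_k^{\,n-1}\le\exp(-(1-\varepsilon/10)(n-1)pk)\le n^{-(1+c_\varepsilon)k}$ for a constant $c_\varepsilon>0$; summing, $\sum_{k\le K}\binom nk\rho_k^{\,n-1}\le\sum_{k\ge1}n^k n^{-(1+c_\varepsilon)k}=O(n^{-c_\varepsilon})=o(1)$. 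When $K<k<cn$ we have $2pk>\varepsilon/5$, so $1-e^{-2pk}>\eta:=1-e^{-\varepsilon/5}>0$ and $\rho_k^{\,n-1}\le e^{-(n-1)\eta/2}$; using $\binom nk\le(en/k)^k$ and the fact that $x\mapsto x\log(en/x)$ is increasing on $(0,n)$, we get $\binom nk\le e^{cn\log(e/c)}$ for all $k\le cn$, so if $c=c(\varepsilon)>0$ is chosen small enough that $c\log(e/c)<\eta/4$ then $\binom nk\rho_k^{\,n-1}\le e^{-n\eta/8}$ for large $n$, whence $\sum_{K<k<cn}\binom nk\rho_k^{\,n-1}\le n\,e^{-n\eta/8}=o(1)$. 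Adding the two regimes proves the claim.

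The main obstacle is the small-support regime: there the $n^k$ choices of support must be beaten by $\rho_k^{\,n-1}$, which is only about $n^{-(1+\varepsilon)k}$, so one must extract the \emph{sharp} linear-in-$k$ constant in $1-(1-2p)^k\approx 2pk$ — the cruder estimates $1-(1-2p)^k\ge 2p$ or even $\ge pk$ would not suffice here. The large-support regime is comparatively easy and simply forces $c$ to be a small constant depending on $\varepsilon$. One should also keep an eye on the edge cases where $p$ is bounded away from $0$ (then $K=0$, so the first regime is empty and $2pk>\varepsilon/5$ holds for all $k\ge1$, so the second regime alone applies) and where $p$ is large (handled by the symmetrization replacing $p$ by $\min(p,1-p)$).
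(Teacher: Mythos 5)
Your proof is correct and follows essentially the same route as the paper: reduce to $\ZZ_2$ via an integer vector with an odd entry (your primitive $y$), then union bound over supports using $\Pr(\Bin(k,p)\text{ even})=\tfrac12\bigl(1+(1-2p)^k\bigr)$, split at support size $\Theta(1/p)$ into a small-support regime where the sharp exponent $\approx 2pk$ beats the $n^k$ choices, and a large-support regime where $\binom{n}{k}\le e^{cn\log(e/c)}$ is beaten by $e^{-\Omega(n)}$. The only differences are cosmetic (explicit constants in place of the paper's $\delta$ and $o(1)/\Omega(1)$ bookkeeping, and the implicit reduction to $\varepsilon<1$, which is harmless by monotonicity in $\varepsilon$ and is assumed at the start of the paper's section).
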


\begin{proof}
If such a vector $x$ were to exist, we would be able to multiply by an integer and then
divide by a power of two to obtain a vector $v\in\ZZ^{n}$ with
at least one odd entry also satisfying $|\supp(v)|<cn$
and $R_{i}\cdot v=0$ for $i=1,\dots,n-1$. Interpreting $v$ as a vector in $\ZZ_{2}^n$, we would have $R_{i}\cdot v\equiv 0 \pmod{2}$ for $i=1,\dots,n-1$ and furthermore $v\in \ZZ_{2}^n$ would be a nonzero vector consisting of less than $cn$ ones. We show that such a vector $v$ is unlikely to exist (working over $\ZZ_{2}$ discretises the problem, so that we may use a union bound).

Let $p^*=\min(p,1-p)\geq (1+\varepsilon)\log n/n$. Consider any $v\in\{ 0,1\} ^{n}$ with $|\supp(v)|=s$.
Then $R_{i}\cdot v$ for $i=1,\dots,n-1$ are i.i.d.\ $\Bin(s,p)$ random variables. Let $P_{s,p}$ denote the probability that a $\Bin(s,p)$ random variable is even. We observe
\begin{align*}
P_{s,p} & =\frac{1}{2}\left(\sum_{i=0}^{s}\binom{s}{i}p^{i}(1-p)^{s-i}+\sum_{i=0}^{s}\binom{s}{i}(-1)^{i}p^{i}(1-p)^{s-i}\right)\\
 & =\frac12+\frac{(1-2p)^{s}}2\le \frac12+\frac{(1-2p^*)^{s}}2.
\end{align*}

Then, using the fact that $e^{-t}=1-t+O(t^2)$ for $t=o(1)$, we deduce
\begin{align*}
P_{s,p}\leq \begin{cases}
e^{-\left(1+o(1)\right)sp^*} & \text{if }sp^*=o(1),\\
e^{-\Omega(1)} & \text{if }sp^*=\Omega(1).
\end{cases}
\end{align*}

Taking $r=\delta/p^*$ for sufficiently small $\delta$ (relative to
$\varepsilon$), and recalling that $p^*\geq (1+\varepsilon)\log n/n$, the probability that there exists nonzero $v\in\ZZ_{2}^n$
with $|\supp(v)|<cn$ and $R_{i}\cdot v\equiv 0 \pmod{2}$
for all $i=1,\dots,n-1$ is at most
\begin{align*}
\sum_{s=1}^{cn}\binom{n}{s}P_{s,p}^{n-1} & \le\sum_{s=1}^{r}e^{s\log n-(1-\varepsilon/3)snp^*}+\sum_{s=r+1}^{cn}e^{s(\log(n/s)+1)-\Omega(n)}\\
 & \le\sum_{s=1}^{\infty}n^{-s\varepsilon/3}+\sum_{s=1}^{cn}e^{n\left((s/n)(\log(n/s)+1)-\Omega(1)\right)}=o(1),
\end{align*}
provided $c$ is sufficiently small (relative to $\delta$).
\end{proof}
Taking $c$ as in \cref{lem:ber-normal}, we immediately see that the
term \cref{eq:P} is of the form $o\left(1\right)$. Observing that
the rows and columns of $M$ have the same distribution, and that
the event $x^{T}M=0$ is simply the event that $x\cdot C_{i}=0$ for
each column $C_{i}$ of $M$, it also follows from \cref{lem:ber-normal}
that the term \cref{eq:small-supp} is of the form $o\left(1\right)$.
Finally, the following straightforward generalisation of the well-known Erd\H os--Littlewood--Offord
theorem shows that the
term \cref{eq:LO} is of the form $o\left(1\right)$, which completes the proof of \cref{thm:ber}. This 
lemma is the only nontrivial ingredient in the proof of \cref{thm:ber}. It appears as \cite[Lemma~8.2]{CV08}, but it can also be quite straightforwardly deduced from the Erd\H os--Littlewood--Offord theorem itself.
\begin{lem}
\label{lem:L-O}Consider a (non-random) vector $x=(x_{1},\dots,x_{n})\in\RR^{n}$,
and let $\xi_{1},\dots,\xi_{n}$ be i.i.d.\ $\Ber(p)$
random variables, and let $p^*=\min(p,1-p)$. Then
\[
\max_{a\in \RR}\Pr(x_{1}\xi_{1}+\dots+x_{n}\xi_{n}=a)=O\left(\frac{1}{\sqrt{|\supp(x)|p^*}}\right).
\]
\end{lem}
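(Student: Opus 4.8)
The plan is to reduce to the classical Erd\H{o}s--Littlewood--Offord (ELO) theorem, which states that if $y_1,\dots,y_m$ are nonzero reals and $\eta_1,\dots,\eta_m$ are i.i.d.\ $\Rad$ (uniform $\pm 1$) random variables, then $\max_{a}\Pr(y_1\eta_1+\dots+y_m\eta_m=a)=O(1/\sqrt m)$. The obstruction to applying this directly is twofold: our coefficients $x_i$ may be zero, and our variables are $\Ber(p)$ rather than symmetric Bernoulli, so the probability of a single atom could in principle be as large as $\max(p,1-p)$, which is near $1$ when $p$ is small. The symmetrisation trick handles both issues at once.

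First I would discard the zero coordinates of $x$: only the $m := |\supp(x)|$ nonzero coefficients $x_i$ contribute to the sum, so we may assume all $x_i\neq 0$ and we must prove a bound of $O(1/\sqrt{mp^*})$. Next, let $\xi_1,\dots,\xi_n$ be our $\Ber(p)$ variables and introduce independent copies $\xi_1',\dots,\xi_n'$ with the same law; write $\zeta_i := \xi_i-\xi_i'$, which takes values $+1$ with probability $p(1-p)$, $-1$ with probability $p(1-p)$, and $0$ with probability $p^2+(1-p)^2$. The standard symmetrisation inequality gives, for any $a\in\RR$,
\[
\Pr(x_1\xi_1+\dots+x_n\xi_n=a)^2\le \Pr(x_1\zeta_1+\dots+x_n\zeta_n=0)\le\max_{b\in\RR}\Pr\Big(\sum_{i\in\supp(x)}x_i\zeta_i=b\Big),
\]
so it suffices to bound the small-ball probability of $\sum_{i\in\supp(x)}x_i\zeta_i$ by $O(1/(mp^*))$.

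Now I would condition on which of the $\zeta_i$ (for $i\in\supp(x)$) are nonzero. Let $S\subseteq\supp(x)$ be the (random) set of indices with $\zeta_i\neq 0$; conditionally on $S$, the variables $(\zeta_i)_{i\in S}$ are i.i.d.\ $\Rad$, so the classical ELO theorem yields $\max_b\Pr(\sum_{i\in S}x_i\zeta_i=b\mid S)=O(1/\sqrt{|S|})$ whenever $|S|\ge 1$ (and this bound is trivially $\le 1$ when $|S|=0$). Taking expectations over $S$, and noting that $|S|\sim\Bin(m,2p(1-p))$ with $2p(1-p)\ge p^*$ (since $p^*=\min(p,1-p)\le 1/2$ forces $1-p^*\ge 1/2$, hence $2p(1-p)=2p^*(1-p^*)\ge p^*$), it remains to show $\E[1/\sqrt{\max(|S|,1)}]=O(1/\sqrt{mp^*})$. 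This is a routine computation: a $\Bin(m,q)$ variable with $q=2p(1-p)\ge p^*$ has mean $mq$ and is concentrated (by Chernoff) around $mq\ge mp^*$, so $1/\sqrt{\max(|S|,1)}$ is with high probability $O(1/\sqrt{mp^*})$, and the contribution of the lower tail $|S|\le mq/2$ is exponentially small and easily absorbed; one can also just bound $\E[1/\sqrt{\max(|S|,1)}]$ directly against $O(1/\sqrt{mq})$ using $\E[1/(1+|S|)]\le 2/((m+1)q)$ type estimates and Jensen. The main obstacle, such as it is, is simply setting up the symmetrisation correctly so that the near-degenerate case $p\to 0$ is accounted for; once $2p(1-p)\gtrsim p^*$ is observed, everything else is standard.
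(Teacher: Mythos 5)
There is a genuine gap: the symmetrisation step costs you a square that you never recover. From $\Pr(x_1\xi_1+\dots+x_n\xi_n=a)^2\le\Pr(\sum_i x_i\zeta_i=0)$ you correctly note that you would need the symmetrised small-ball probability to be $O(1/(mp^*))$, but your conditioning argument only yields $O(1/\sqrt{mp^*})$ --- indeed your own ``it remains to show'' line asks precisely for this weaker estimate --- and the stronger bound $O(1/(mp^*))$ is simply false in general: take $x=(1,\dots,1)$, so that $\sum_i x_i\zeta_i$ is the difference of two independent $\Bin(m,p)$ variables, whose probability of vanishing is $\Theta(1/\sqrt{mp(1-p)})$ by the local central limit theorem. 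So the argument as written proves only $\Pr(x_1\xi_1+\dots+x_n\xi_n=a)=O\big((|\supp(x)|p^*)^{-1/4}\big)$, not the claimed $O\big((|\supp(x)|p^*)^{-1/2}\big)$. (Incidentally, the $1/4$-power bound would still make the term \cref{eq:LO} be $o(1)$ in \cref{sec:ber}, but it does not prove the lemma as stated.)

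The repair is to drop the symmetrisation and inject the Rademacher randomness directly into the $\Ber(p)$ variables. For $p\le 1/2$ write $\xi_i=\beta_i\theta_i$ with $\beta_i\sim\Ber(2p)$ and $\theta_i\sim\Ber(1/2)$ independent, which has the correct law; for $p>1/2$ apply the same to $1-\xi_i$, which only shifts the target value $a$. Conditioning on $\beta$, the sum becomes $\sum_{i\in S}x_i\theta_i$ with $S=\{i\in\supp(x):\beta_i=1\}$, and since $\Ber(1/2)$ is an affine image of a Rademacher variable, the classical Erd\H{o}s--Littlewood--Offord theorem gives the conditional bound $O(1/\sqrt{\max(|S|,1)})$. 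Now $|S|\sim\Bin(m,2p^*)$ (up to replacing $2p$ by $2(1-p)$ in the second case), and your final expectation computation applies verbatim, giving $O(1/\sqrt{mp^*})$ with no loss. For comparison, the paper does not prove this lemma at all: it cites \cite[Lemma~8.2]{CV08} and merely remarks that a direct deduction from the Erd\H{o}s--Littlewood--Offord theorem is possible; the mixture decomposition above is that deduction, and your conditioning-plus-binomial-expectation step is exactly the right second half of it.
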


\section{Singularity of sparse combinatorial matrices}
\label{sec:comb}

Let us again fix $0<\eps<1$. The proof of \cref{thm:comb} proceeds in almost exactly the same way
as the proof of \cref{thm:ber}, but there are three significant complications.
First, since the entries are no longer independent, the calculations
become somewhat more technical. Second, the rows and columns of $Q$ have different distributions,
so we need two versions of \cref{lem:ber-normal}: one for vectors in the left kernel and one for vectors in the right kernel. Third, the fact that
each row has exactly $d$ ones means that we are not quite as free
to do computations over $\ZZ_{2}$ (for example, if $d$ is even and
$v$ is the all-ones vector then we always have $Qv=0$ over $\ZZ_{2}$). For certain parts of the argument we will instead work over $\ZZ_{d-1}$.

Before we start the proof, the following lemma will allow us to restrict our attention to the case where $d\le n/2$, which will be convenient.

\begin{lem}
Let $Q\in \RR^{n\times n}$ be a matrix whose every row has sum $d$, for some $d\notin\{0,n\}$. Let $J$ be the $n\times n$ all-ones matrix. Then $Q$ is singular if and only if $J-Q$ is singular.
\end{lem}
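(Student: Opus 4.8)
The plan is to exploit the fact that the all-ones vector $\one$ is a common eigenvector of $Q$ and $J$. Since every row of $Q$ sums to $d$ we have $Q\one = d\one$, and since $J = \one\one^{T}$ we have $J\one = n\one$; hence $(J-Q)\one = (n-d)\one$. The hypothesis $d\notin\{0,n\}$ will enter precisely through the two facts $d\neq 0$ and $n-d\neq 0$, both of which the argument needs.

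First I would reduce to a single implication by symmetry: it suffices to prove that if $Q$ is singular then $J-Q$ is singular. Indeed, $J-Q$ is again a matrix all of whose rows have the common sum $n-d$, and $n-d\notin\{0,n\}$ is equivalent to $d\notin\{0,n\}$; so applying this implication with $J-Q$ in place of $Q$ shows that if $J-Q$ is singular then $J-(J-Q)=Q$ is singular, which is the reverse direction.

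To prove the implication, suppose $Q$ is singular and choose a nonzero $x$ with $Qx=0$. Using $J=\one\one^{T}$, one computes $(J-Q)x = Jx = (\one^{T}x)\,\one$. Now set $\alpha = -(\one^{T}x)/(n-d)$, which is well defined because $n-d\neq 0$, and let $y = x + \alpha\one$. Then $(J-Q)y = (\one^{T}x)\one + \alpha(n-d)\one = 0$. To see that $y\neq 0$: if $y=0$ then $x$ would be a scalar multiple of $\one$, but the only multiple of $\one$ in the kernel of $Q$ is $0$ since $Q\one = d\one\neq 0$ (here we use $d\neq 0$), contradicting $x\neq 0$. Thus $y$ is a nonzero vector with $(J-Q)y=0$, so $J-Q$ is singular.

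I do not anticipate any real obstacle here; the only subtlety worth flagging is that both halves of the hypothesis $d\notin\{0,n\}$ get used — one to make $\alpha$ well defined, the other to keep $\one$ out of $\ker Q$ — and that the symmetry reduction requires noting that $J-Q$ inherits the hypotheses of the lemma.
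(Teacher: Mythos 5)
Your proof is correct, but it runs through the kernel rather than the column space, which is where the paper's argument lives. The paper observes that the sum of the columns of $Q$ is $d\one$, so (as $d\neq 0$) the all-ones vector $\one$ lies in the column space of $Q$; hence every column $\one-C_i$ of $J-Q$ lies in the column space of $Q$, giving $\operatorname{rank}(J-Q)\le\operatorname{rank}(Q)$ and thus the forward implication in two lines, with the reverse implication by the same symmetry you use. Your version instead builds an explicit null vector: from $Qx=0$ you shift by $\alpha\one$ with $\alpha=-(\one^{T}x)/(n-d)$ and check nondegeneracy via $Q\one=d\one\neq 0$. Both arguments hinge on the same structural facts ($Q\one=d\one$, $J=\one\one^{T}$), but the column-space containment avoids your case analysis showing $y\neq 0$ and the explicit choice of $\alpha$, while your construction is more concrete and makes visible exactly where each half of the hypothesis $d\notin\{0,n\}$ is used; note that the paper's proof spends $d\neq 0$ on the forward direction and $n-d\neq 0$ on the reverse, whereas yours needs both in the forward direction alone.
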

\begin{proof}
Note that the all-ones vector $\one$ is in the column space of $Q$ (since the sum of all columns of $Q$ equals $d\one$). Hence every column of $J-Q$ is in the column space of $Q$. Therefore, if $Q$ is singular, then $J-Q$ is singular as well. The opposite implication can be proved the same way.
\end{proof}

In the rest of the section we prove \cref{thm:comb} under the assumption that $(1+\eps)\log n\le d\le n/2$ (note that if $Q$ is a uniformly random zero-one matrix with every row having exactly $d$ ones, then $J-Q$ is a uniformly random zero-one matrix with every row having exactly $n-d$ ones).

The first ingredient we will need is an analogue of \cref{lem:L-O} for ``combinatorial''
random vectors. In addition to the notion of the support
of a vector, we define a \emph{fibre} of a vector to be a set of all
indices whose entries are equal to a particular value.
\begin{lem}
\label{lem:comb-LO}Let $0\le d\le n/2$, and consider a (non-random) vector $x\in\RR^{n}$ whose largest fibre
has size $n-s$, and let $\gamma\in\{ 0,1\} ^{n}$ be a random
zero-one vector with exactly $d$ ones. Then
\[
\max_{a\in \RR}\Pr(x\cdot\gamma=a)=O\left(\sqrt{n/(sd)}\right).
\]
\end{lem}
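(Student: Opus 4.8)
The plan is to reduce this to the classical Erdős–Littlewood–Offord theorem by conditioning appropriately. Let $F$ be the largest fibre of $x$, so $|F| = n-s$ and $x$ takes a single value, say $\lambda$, on all coordinates in $F$. Write $\gamma = (\gamma_i)_{i=1}^n$ for the random zero-one vector with exactly $d$ ones, and split $x\cdot \gamma = \lambda \sum_{i \in F}\gamma_i + \sum_{i \notin F} x_i \gamma_i$. The idea is to condition on everything about $\gamma$ except the way it distributes its ones \emph{within} $F$: that is, condition on the restriction $\gamma|_{[n]\setminus F}$. Once we do this, the number $k := d - \sum_{i\notin F}\gamma_i$ of ones that must land in $F$ is determined, and conditionally $\gamma|_F$ is a uniformly random zero-one vector of length $n-s$ with exactly $k$ ones; moreover $\sum_{i\notin F} x_i\gamma_i$ is a fixed constant, so we only need to control the anticoncentration of $\lambda \sum_{i\in F}\gamma_i = \lambda \cdot \mathrm{Hyp}$, where $\mathrm{Hyp}$ is a hypergeometric random variable (number of ones in a uniform $k$-subset of an $(n-s)$-set, equivalently of an $m$-set into which we drop... ). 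Since $\lambda \neq 0$ (it is the value on the largest fibre, and for the bound to be meaningful we may assume $x \neq 0$; if $\lambda = 0$ then $s = n$ and the claim is about an arbitrary vector, handled below), it suffices to show $\max_{b} \Pr(\mathrm{Hyp} = b) = O(\sqrt{n/(sd)})$ uniformly over the conditioning.

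The anticoncentration of the hypergeometric distribution is the technical heart. The cleanest route is: a hypergeometric random variable counting the ones in a uniform $k$-subset of an $N$-set (here $N = n-s \geq n/2$, using $s \le n$... actually $s$ could be close to $n$, so $N$ could be small — this needs care) can itself be generated by the classical Littlewood–Offord setup. Concretely, $\mathrm{Hyp}$ has the same distribution as $\sum_{i=1}^{N} \eta_i$ conditioned on $\sum \eta_i = k$ where $\eta_i$ are i.i.d. Bernoulli; but conditioning can blow up point probabilities, so instead I would use the direct bound $\max_b \Pr(\mathrm{Hyp}=b) = O(1/\sqrt{\mathrm{Var}(\mathrm{Hyp})})$ together with the standard formula $\mathrm{Var}(\mathrm{Hyp}) = k\frac{N-k}{N}\cdot\frac{?}{}$ — more precisely, for the number of red balls in a sample of size $k$ from a population of $N$ with $R$ red balls, the variance is $k\frac{R}{N}\frac{N-R}{N}\frac{N-k}{N-1}$; in our reformulation it is $\Theta(k(N-k)/N)$. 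The bound $\max_b \Pr(\mathrm{Hyp}=b) = O(1/\sqrt{\mathrm{Var}})$ for a hypergeometric is classical (it follows, e.g., from log-concavity of the hypergeometric pmf plus Chebyshev, or from a local limit theorem). So the remaining task is bookkeeping: show that over the range of $k$ that can occur, $\mathrm{Var}(\mathrm{Hyp}) = \Omega(sd/n)$.

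For the variance lower bound, recall $k$ is the number of ones of $\gamma$ inside $F$, and $|F| = n-s \ge n-n = $ — we need $s \le n-1$ at least; if $s = n$ handle separately. The worst case is when $\gamma$ puts as few or as many ones as possible in $F$. Since $|[n]\setminus F| = s$, we have $\max(0, d-s) \le k \le \min(d, n-s)$. Because $d \le n/2 \le n-s + s$, one checks $k$ is typically of order $\min(d, n-s)$ and $N-k = (n-s)-k$ is of order $\min(n-s, n-d) \ge \min(n-s, n/2)$, so $\mathrm{Var} = \Theta(k(N-k)/N) = \Theta(\min(d,n-s)\cdot\min(n-s,n/2)/(n-s))$. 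A short case analysis (is $s \le d$? is $n-s \ge n/2$, i.e. $s \le n/2$?) shows this is always $\Omega(sd/n)$: when $s \le d$ and $s \le n/2$ it is $\Theta(s)$ which dominates $sd/n$; when $d \le s \le n/2$ it is $\Theta(d)$; when $s > n/2$ one uses $n-s < n/2$ but $s \ge n/2$ so $sd/n \le d$ and the variance is still $\Theta(\min(d,n-s))$, and here $n-s$ could be small, making this the tight regime. The main obstacle is exactly this last regime where the largest fibre is itself small ($n-s = o(n)$): then $N = n-s$ is small and one must verify $k(N-k)/N = \Omega(sd/n) = \Omega(d)$ — but here $k \le N = n-s$ could be $o(d)$, so in fact $\mathrm{Var} = \Theta(k(N-k)/N) \le \Theta(N) = \Theta(n-s)$, which we need to be $\Omega(d)$; this forces us to also invoke the \emph{second-largest} fibre when $n-s$ is small, i.e. when $x$ has no dominant fibre the vector is "spread out" and we should pick a different pair of coordinates to expose. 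I expect this degenerate case to be where most of the genuine work lies, and I would handle it by choosing, instead of the largest fibre, any two fibres whose union is large and running the same conditioning argument to produce a two-point (rather than one-point) randomness, for which the classical Erdős–Littlewood–Offord $O(1/\sqrt{m})$ bound with $m \asymp \min(d, n-d, n-s')$ applies directly. Assembling these cases gives the uniform bound $O(\sqrt{n/(sd)})$.
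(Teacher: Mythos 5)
Your central conditioning step does not work. You condition on $\gamma|_{[n]\setminus F}$, where $F$ is the largest fibre of $x$. But this conditioning determines $\sum_{i\notin F}\gamma_i$, hence determines $k=d-\sum_{i\notin F}\gamma_i$, and since $x$ is constant (equal to $\lambda$) on $F$, it also determines $\lambda\sum_{i\in F}\gamma_i=\lambda k$. So under your conditioning the whole quantity $x\cdot\gamma$ is deterministic: the ``hypergeometric random variable'' $\sum_{i\in F}\gamma_i$ that you propose to anticoncentrate is just the constant $k$, and the conditional point probability is $0$ or $1$, not $O(\sqrt{n/(sd)})$. The randomness relevant to anticoncentration lives precisely in the parts you froze: how many ones fall into $F$ versus $F^c$, and where they land inside $F^c$ (where $x$ is non-constant). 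These two sources are correlated with each other and with the ``constant'' term $\lambda k$, so one cannot simply appeal to anticoncentration of a single hypergeometric count; e.g.\ looking only at the number of ones in $F$ is sound only when $x$ is constant on $F^c$ as well. Your closing remark about the degenerate regime (small $n-s$, switching to two fibres and a ``two-point randomness'') gestures at the right kind of fix but is not an argument.

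For comparison, the paper sidesteps all of this by realising $\gamma$ in two stages: first choose $d$ random disjoint pairs $(i_\ell,j_\ell)$ of coordinates, then pick one element of each pair independently with probability $1/2$ to be a one. A Chernoff argument for hypergeometric variables shows that with probability $1-e^{-\Omega(sd/n)}$ at least $\Omega(sd/n)$ pairs have $x_{i_\ell}\ne x_{j_\ell}$; conditioning on the pairs, $x\cdot\gamma$ becomes a weighted sum of i.i.d.\ $\Ber(1/2)$ variables with $\Omega(sd/n)$ nonzero weights, and the Erd\H os--Littlewood--Offord theorem gives the bound $O(\sqrt{n/(sd)})$. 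If you want to pursue your route, you would need to identify a conditioning that leaves genuine randomness in $x\cdot\gamma$ in all regimes of $s$; the pairing device is exactly such a mechanism, and it handles the case of many distinct values on $F^c$ and the small-$(n-s)$ case uniformly.
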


We deduce \cref{lem:comb-LO} from
the $p=1/2$ case of \cref{lem:L-O} (that is, from the Erd\H os--Littlewood--Offord
theorem~\cite{Erd45}).

\begin{proof}
The case $p=1/2$ is treated in \cite[Proposition~4.10]{LLTTY17}; this proof proceeds along similar lines. Let $p=d/n\leq 1/2$. We realise the distribution
of $\gamma$ as follows. First choose $d=pn$ random disjoint pairs
$\left(i_{1},j_{1}\right),\dots,\left(i_{pn},j_{pn}\right)\in\left\{ 1,\dots,n\right\} ^{2}$ (each having distinct entries),
and then determine the 1-entries in $\gamma$ by randomly choosing one
element from each pair.

We first claim that with probability $1-e^{-\Omega\left(sp\right)}$,
at least $\Omega\left(sp\right)$ of our pairs $\left(i,j\right)$
have $x_{i}\ne x_{j}$ (we say such a pair is \emph{good}). To see
this, let $I$ be a union of fibres of $x$, chosen such that $\left|I\right|\ge n/3$
and $n-\left|I\right|\geq s/3$ (if $s\le 2n/3$ we can
simply take $I$ to be the largest fibre of $x$, and otherwise we
can greedily add fibres to $I$ until $\left|I\right|\ge n/3$). To prove our claim, we will prove that in fact with the desired probability there are $\Omega(sp)$ different $\ell$ for which $i_\ell\notin I$ and $j_\ell\in I$.

Let $f=\ceil{pn/6}$ and let $S$ be the set of $\ell\le f$ for which
$i_{\ell}\notin I$. So, $\left|S\right|$ has a hypergeometric distribution
with mean $(n-|I|)f/n=\Omega\left(sp\right)$, and by a Chernoff bound (see for
example \cite[Theorem~2.10]{JLR00}), we have $\left|S\right|=\Omega\left(sp\right)$ with
probability $1-e^{-\Omega\left(sp\right)}$. Condition on such an outcome
of $i_{1},\dots,i_{f}$. Next, let $T$ be the set of $\ell\in S$ for which $j_{\ell}\in I$.
Then, conditionally, $\left|T\right|$ has a hypergeometric distribution
with mean at least $(|I|-f)|S|/n=\Omega\left(sp\right)$, so again using a Chernoff
bound we have $\left|T\right|=\Omega\left(sp\right)$ with probability
$1-e^{-\Omega\left(sp\right)}$, as claimed.

Now, condition on an outcome of our random pairs such that at least
$\Omega(sp)$ of them are good. Let $\xi_{\ell}$ be the indicator random variable for the event that
$i_{\ell}$ is chosen from the pair $\left(i_{\ell},j_{\ell}\right)$,
so $\xi_{1},\dots,\xi_{pn}$ are i.i.d.\ $\Ber\left(1/2\right)$
random variables, and $x\cdot\gamma=a$ if and only if 
\[
(x_{i_{1}}-x_{j_{1}})\xi_{1}+\dots+(x_{i_{pn}}-x_{j_{pn}})\xi_{1}=a-x_{j_{1}}-\dots-x_{j_{pn}}.
\]
Under our conditioning, $\Omega\left(sp\right)$ of the $x_{i_{\ell}}-x_{j_{\ell}}$
are nonzero, so by \cref{lem:L-O} with $p=1/2$, conditionally we have $\Pr\left(x\cdot\gamma=a\right)\le O\left(1/\sqrt{sp}\right)$.
We deduce that unconditionally
\[
\Pr(x\cdot\gamma=0)\le e^{-(sp)}+O(1/\sqrt{sp})=O(1/\sqrt{sp})=O(\sqrt{n/(sd)}),
\]
as desired.
\end{proof}

The proof of \cref{thm:comb} then reduces to the following two lemmas. Indeed, for a constant $c>0$ (depending on $\varepsilon$) satisfying the statements in \cref{lem:comb-normal-hard,lem:comb-normal}, we can take $t=cn/\log d$, and 
\[\mathcal P=\{x\in \QQ^n:x\text{ has largest fibre of size at most }(1-c/\log d)n\}.\]
We can then apply \cref{lem:general}. By \cref{lem:comb-normal-hard}, the term \cref{eq:small-supp} is bounded by $o(1)$, by \cref{lem:comb-normal} the term \cref{eq:P} is bounded by $(n/t)\cdot n^{-\Omega(1)}=(\log d/c)\cdot n^{-\Omega(1)}=o(1)$, and by \cref{lem:comb-LO} the term \cref{eq:LO} is bounded by $(n/t)\cdot O\left(\sqrt{n\log d/(cnd)}\right)= O(\log^{3/2}d/\sqrt d)=o(1)$.

\begin{lem}
\label{lem:comb-normal-hard}Let $Q$ be a random combinatorial matrix (with $d$ ones in each row),
with $(1+\varepsilon)\log n\le d\le n/2$. There
is $c>0$ (depending only on $\varepsilon$) such that with probability
$1-o(1)$, there is no nonzero vector $x\in\QQ^{n}$ with
$|\supp(x)|<cn/\log d$ and $x^{T}Q=0$.
\end{lem}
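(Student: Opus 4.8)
The plan is to mimic the proof of \cref{lem:ber-normal}, but with two modifications to handle the dependencies between entries of a combinatorial row. As before, if a bad vector $x\in\QQ^n$ with $|\supp(x)|<cn/\log d$ and $x^TQ=0$ exists, then after clearing denominators we obtain an integer vector $v$ in the same left kernel with $|\supp(v)|<cn/\log d$; we may assume $\gcd$ of its entries is $1$. We cannot simply reduce mod $2$, because if $d$ is even then $v=\one$ already lies in the left kernel over $\ZZ_2$ (and more generally reduction mod $q$ loses the constraint information in a bad way). Instead I would reduce modulo $d-1$: since each row $R_i$ has exactly $d$ ones, we have $R_i\cdot \one = d \equiv 1 \pmod{d-1}$, and this rigidity is what we exploit. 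Concretely, the event to bound is that there is a nonzero $v\in\ZZ_{d-1}^n$ with $|\supp(v)|<cn/\log d$ such that $R_i\cdot v\equiv 0\pmod{d-1}$ for all $i=1,\dots,n-1$.

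The core estimate is an anticoncentration bound: for a fixed $v$ with support of size $s$ (and, say, not all nonzero entries equal modulo $d-1$ — one must be slightly careful about the degenerate all-equal case, which can be absorbed because there are only few such vectors, or handled by noting $\one\notin$ the relevant set once we fix $v$ to have $\gcd 1$ and small support so $v\ne\one$), the probability that $R_i\cdot v\equiv 0 \pmod{d-1}$ should be at most something like $e^{-\Omega(\min(1, sd/n))}$ when $s$ is small, and bounded away from $1$ when $s$ is large. This is exactly the regime covered by \cref{lem:comb-LO}: viewing the congruence $R_i\cdot v\equiv 0$ as $v\cdot \gamma = a$ for the random indicator vector $\gamma=R_i$ over $\RR$ would give $O(\sqrt{n/(sd)})$, but over $\ZZ_{d-1}$ we need a version of \cref{lem:comb-LO} that localizes on residue classes rather than on integers. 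I would derive such a bound by the same pairing trick used to prove \cref{lem:comb-LO}: pair up coordinates, observe $\Omega(sd/n)$ good pairs where $v_i\not\equiv v_j$, and apply a mod-$(d-1)$ Littlewood--Offord / Erd\H os--Ko estimate for a sum of independent $\Ber(1/2)$ variables with nonzero-mod-$(d-1)$ coefficients, which gives a bound of the form $\tfrac12+\tfrac12(\text{something})^{\Omega(sd/n)}$, in any case $\le e^{-\Omega(\min(1,sd/n))}$. Then the union bound reads
\[
\sum_{s=1}^{cn/\log d}\binom{n}{s}(d-1)^{s}\cdot \Big(e^{-\Omega(\min(1,sd/n))}\Big)^{n-1}
\le \sum_{s=1}^{cn/\log d} e^{\,s\log n + s\log d - \Omega(\min(s d, n))},
\]
where the factor $(d-1)^s$ counts the choices of the nonzero residues on the support. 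For small $s$ (say $sd\le n$, i.e.\ $s\le n/d$), the exponent is $s\log n + s\log d - \Omega(sd)$; since $d\ge(1+\eps)\log n$ this is at most $-\Omega(s\eps\log n)$ (using $\log d = O(\log n)$ and the constant in $\Omega(\cdot)$ beating the $\log n+\log d\le (2+o(1))\log n$, which needs $d$ large enough relative to $\eps$; for small fixed $d$ the claim reduces to the Bernoulli-type count and is also fine), so this part of the sum is $o(1)$. For the remaining range $n/d < s < cn/\log d$, the exponent is at most $s(\log n + \log d) - \Omega(n) \le s\cdot O(\log n) - \Omega(n) \le O(c)\cdot n - \Omega(n) < -\Omega(n)$ once $c$ is a small enough constant, so this tail is $e^{-\Omega(n)}=o(1)$ as well.

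The main obstacle I anticipate is the anticoncentration step over $\ZZ_{d-1}$: one has to rule out, or separately dispose of, vectors $v$ whose nonzero entries are all congruent modulo $d-1$ (for such $v$ the pairing trick produces no good pairs and the congruence can be satisfied with probability close to $1$). The way to handle this: having normalized $v$ to have content $1$ and $|\supp(v)|<cn/\log d < n$, the vector $v$ is certainly not a scalar multiple of $\one$ over $\ZZ$, but over $\ZZ_{d-1}$ it still could be constant on its support. If $v$ is constant $\equiv \lambda$ on a support of size $s$, then $R_i\cdot v \equiv \lambda |R_i \cap \supp(v)| \pmod{d-1}$, and one checks that $|R_i\cap\supp(v)|$ is itself anticoncentrated (it is hypergeometric with parameters making its distribution spread over $\Omega(\sqrt{sd/n}\,)$ values, hence no single value — in particular no value forcing $\lambda|R_i\cap\supp(v)|\equiv0$ — has probability close to $1$), so the same union bound goes through for these vectors too, possibly with a slightly worse but still adequate exponent. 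Assembling these pieces, all terms in \cref{lem:general} are $o(1)$ and the lemma follows.
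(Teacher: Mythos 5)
There is a genuine gap, and it comes right at the start: you are bounding the wrong event. \cref{lem:comb-normal-hard} is about the \emph{left} kernel: $x^TQ=0$ means $x\cdot C_i=0$ for every \emph{column} $C_i$ of $Q$ (equivalently, $\sum_j x_jR_j=0$ as a linear combination of the rows). After clearing denominators you correctly say $v$ lies in the left kernel, but then the event you actually estimate is ``$R_i\cdot v\equiv 0\pmod{d-1}$ for $i=1,\dots,n-1$'', which is the \emph{right}-kernel event $Qv=0$ (and only for $n-1$ rows). These are different events, and for a combinatorial matrix the rows and columns have different distributions --- this is exactly one of the complications the paper flags. Your union bound raises a single per-row probability to the power $n-1$, which is legitimate only because the rows $R_i$ are independent; for the left-kernel event the relevant dot products are with the columns $C_i$, and the columns are \emph{not} independent (the entries within a row are coupled by the constraint of having exactly $d$ ones), so no such product bound is available. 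The whole difficulty of this lemma is precisely this dependence, and your proposal never engages with it. The paper's route is different: it observes that the event $v\cdot C_i\equiv 0\pmod 2$ for all $i$ depends only on the $s\times n$ submatrix $Q_v$ of rows indexed by $\supp(v)$, and compares the law of $Q_v$ with an i.i.d.\ $\Ber(d/n)$ matrix by conditioning on the row sums, paying a factor $e^{O(s\log d)}$ per vector; this comparison cost is also the reason the support threshold is $cn/\log d$ rather than $cn$. Relatedly, your motivation for working mod $d-1$ is misplaced here: the all-ones obstruction ($Q\one\equiv 0$ for $d$ even) is a right-kernel phenomenon, it cannot occur for vectors of support size $<n$, and indeed the paper proves this lemma over $\ZZ_2$; the mod-$(d-1)$ device is what the paper uses for the other lemma (\cref{lem:comb-normal}), whose statement (near-constant right-kernel vectors) is what your sketch is closer to proving.

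A secondary, fixable issue: even for the event you do bound, an anticoncentration estimate of the form $e^{-\Omega(\min(1,sd/n))}$ with an unspecified constant is not enough in the critical regime $d=(1+\eps)\log n$ with small $\eps$, since the entropy cost per support element is $\log n+\log d$ and the constant in the exponent must exceed $1/(1+\eps)$ up to $o(1)$; the paper's \cref{lem:hypergeometric} is careful to obtain the constant $1-o(1)$ when $sp=o(1)$ for exactly this reason. Your parenthetical dismissal (``for small fixed $d$\dots'') does not address this regime, where $d\to\infty$ but $d=\Theta(\log n)$. Also, your worry about vectors constant on their support is unfounded for small-support vectors: the good pairs in the pairing argument straddle the boundary of the support, where $v_i-v_j$ is automatically a nonzero residue.
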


\begin{lem}
\label{lem:comb-normal}Let $R_{1},\dots,R_{n-1}$ be the first $n-1$ rows of a random combinatorial matrix (with $d$ ones in each row), with $(1+\varepsilon)\log n\le d\le n/2$. There is $c>0$ (depending only on $\varepsilon$)
such that with probability $1-n^{-\Omega(1)}$, every nonzero $x\in \QQ^n$ satisfying $R_{i}\cdot x=0$
for all $i=1,\dots,n-1$ has largest fibre of size at most $(1-c/\log d)n$.
\end{lem}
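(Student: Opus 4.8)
The plan is to follow the proof of \cref{lem:ber-normal}: discretise by reducing modulo~$2$, then run a union bound over the (now finitely many) relevant vectors. \emph{Reduction.} Suppose some nonzero $x\in\QQ^n$ satisfies $R_i\cdot x=0$ for all $i\le n-1$ and has a fibre of value $\lambda$ of size more than $(1-c/\log d)n$; equivalently, $U:=\{j:x_j\neq\lambda\}$ has $|U|<cn/\log d$. If $\lambda=0$ set $w=x$; if $\lambda\neq0$ set $w=\one-x/\lambda$, so that $R_i\cdot w=d-R_i\cdot x/\lambda=d$ for every $i$. In either case $w\in\QQ^n$ is nonzero (if $\lambda\neq0$ and $w=0$ then $x=\lambda\one$, forcing $R_i\cdot x=\lambda d\neq0$), is supported exactly on $U$, and satisfies $R_i\cdot w=m_0$ for all $i$, where $m_0\in\{0,d\}$. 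Clearing denominators and dividing out the gcd of the entries gives a \emph{primitive} integer vector $v$ with $\supp(v)=U$ and $R_i\cdot v=m$ for all $i\le n-1$, for some $m\in\ZZ$; reducing mod~$2$, $\bar v:=v\bmod2$ is a nonzero vector of $\ZZ_2^n$ with $|\supp(\bar v)|\le|U|<cn/\log d$ and $R_i\cdot\bar v\equiv\bar m\pmod2$ for all $i\le n-1$, where $\bar m:=m\bmod2\in\{0,1\}$. So it suffices to show that with probability $1-n^{-\Omega(1)}$ there is no nonzero $\bar v\in\ZZ_2^n$ with $|\supp(\bar v)|<cn/\log d$ and no $\bar m\in\{0,1\}$ with $R_i\cdot\bar v\equiv\bar m\pmod2$ for $i=1,\dots,n-1$.

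\emph{Union bound.} Fix a $k$-set $S\su[n]$ with $k<cn/\log d$ and set $\bar v=\one_S$. Then $R_i\cdot\bar v\bmod2$ is the parity of the number of ones of $R_i$ lying in $S$, so for each $\bar m$ the events ``$R_i\cdot\bar v\equiv\bar m$'' $(i\le n-1)$ are i.i.d., each occurring with probability $P_k:=\Pr(|D\cap S|\text{ even})$ if $\bar m=0$ and $1-P_k$ if $\bar m=1$, where $D$ is a uniformly random $d$-subset of $[n]$. Summing over $S$ and $\bar m$, the failure probability is at most $2\sum_{k=1}^{cn/\log d}\binom nk\max(P_k,1-P_k)^{n-1}$, and I would split this sum at $kd/n=\delta$ for a small constant $\delta=\delta(\eps)>0$. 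For $kd/n\le\delta$, the $\bar m=1$ part is at most $\sum_k\binom nk(kd/n)^{n-1}$ (since $\Pr(|D\cap S|\text{ odd})\le\E|D\cap S|=kd/n$), which is exponentially small in $n$ (using $d\ge(1+\eps)\log n$ to control $\sum_{k\le\delta n/d}n^k$); and for $\bar m=0$ one shows, exactly as the Bernoulli bound $P_{s,p}\le e^{-(1+o(1))sp^*}$ in \cref{lem:ber-normal} --- the only change being that $|D\cap S|$ is hypergeometric rather than binomial, so the identity $P_{s,p}=\tfrac12(1+(1-2p)^s)$ is replaced by the inequality $P_k\le\tfrac12(1+(1-2d/n)^k)$ for $kd/n$ small, proved by a short factorial-moment computation --- that $P_k\le e^{-(1-o(1))kd/n}$, hence $P_k^{n-1}\le e^{-(1-o(1))kd}$, so that
\[
\sum_{k\ge1}\binom nk e^{-(1-o(1))kd}\le\sum_{k\ge1}e^{k(\log n+1-(1-o(1))(1+\eps)\log n)}=n^{-\Omega(1)}
\]
once $\delta$ is small enough.

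For the range $kd/n>\delta$ I would instead argue that both $P_k$ and $1-P_k$ are bounded away from~$1$ by a constant depending only on~$\delta$; this suffices because then $\max(P_k,1-P_k)^{n-1}=e^{-\Omega(n)}$, while $\binom nk\le\binom{n}{cn/\log d}=e^{O(n\log\log d/\log d)}=e^{o(n)}$ (crucially, because $c/\log d\to0$), so this part of the sum is at most $n\cdot e^{o(n)}\cdot e^{-\Omega(n)}=e^{-\Omega(n)}$. To prove the claim, realise a row as in the proof of \cref{lem:comb-LO}: choose $d$ disjoint pairs in $[n]$ and pick one element of each by an independent fair coin. If at least one pair \emph{straddles} $S$ (one endpoint in $S$, one outside), then conditionally $|D\cap S|$ equals a constant plus a nonzero number of i.i.d.\ fair coins, hence is equidistributed mod~$2$; thus $P_k\in[\tfrac12q_k,\,1-\tfrac12q_k]$, where $q_k$ is the probability of having a straddling pair. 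A short computation (bounding the chance that the $2d$ paired elements miss $S$, and, given they do not, that the induced matching keeps $S\cap(\text{paired set})$ internal) yields $q_k\ge1-e^{-2kd/n}-1/d\ge\tfrac12(1-e^{-2\delta})$ for large~$n$, a positive constant, so $P_k,1-P_k\le1-\Omega_\delta(1)$ as desired. Putting the two ranges together gives the bound $n^{-\Omega(1)}$.

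The main obstacle is precisely the range $kd/n>\delta$: when $kd/n$ is large (up to order $d/\log d$) the distribution of $|D\cap S|$ is very spread out, so neither a moment estimate nor a bound on a single atom $\Pr(|D\cap S|=j)$ shows that $P_k$ is bounded away from~$1$; one genuinely needs the mod-$2$ equidistribution supplied by the fair coins in the pair realisation, exactly as in \cref{lem:comb-LO}. A secondary, more technical nuisance is that in the combinatorial model $P_k$ is not the clean expression $\tfrac12(1+(1-2p)^k)$ of the Bernoulli model, so the small-$k$ estimate must handle the hypergeometric distribution with a little extra care.
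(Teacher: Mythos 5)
Your argument is correct, but it implements the reduction differently from the paper, and the comparison is instructive. The paper avoids the ``large constant fibre'' issue by working modulo $q=d-1$: a nonzero constant vector $v=\lambda\one\in\ZZ_q^n$ cannot satisfy $R_i\cdot v\equiv 0\pmod q$ since $R_i\cdot\one=d\equiv 1$, so after passing to a primitive integer vector one only has to rule out \emph{nonconstant} vectors in $\ZZ_q^n$ with a huge fibre; the price is a factor $q^{s+1}$ in the union bound (one must enumerate the values on and off the big fibre) and a per-row anti-concentration bound over $\ZZ_q$ (\cref{lem:hypergeometric}), proved by the same pair-plus-fair-coin realisation you use. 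You instead stay modulo $2$, handling the fibre value $\lambda\neq 0$ by the affine shift $w=\one-x/\lambda$, which converts the kernel equations into $R_i\cdot w=d$ with $w$ supported off the big fibre; after clearing denominators this forces a common right-hand side $m$, so modulo $2$ you must allow both parities $\bar m\in\{0,1\}$ but your union bound runs only over supports $S$ and $\bar m$ (no $q^{s+1}$ factor). The two choices are a genuine trade-off: the paper's modulus $q=d-1$ kills the constant obstruction for free but enlarges the enumeration; your mod-$2$ reduction keeps the enumeration minimal but splits the row-probability analysis into the $\bar m=0$ case (hypergeometric parity close to $1$, needing the $e^{-(1-o(1))kd/n}$ bound) and the $\bar m=1$ case (handled by Markov in the small range). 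Your treatment of the range $kd/n>\delta$ via the pairing realisation --- parity equidistributed given a straddling pair, and $\Pr(\text{no straddling pair})\le e^{-2kd/n}+\tfrac{1}{2d-1}$ since an even intersection of size $m\ge 2$ is matched internally with probability at most $\tfrac{1}{2d-1}$ --- is sound and is essentially the mechanism of \cref{lem:hypergeometric}/\cref{lem:comb-LO}.

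One minor point: the inequality $P_k\le\tfrac12\bigl(1+(1-2d/n)^k\bigr)$ that you assert for the hypergeometric parity is not actually proved in your sketch (and as an identity-style bound it is delicate: the alternating factorial-moment expansion does not compare termwise). It is also unnecessary: in the range $kd/n\le\delta$ you only need $P_k\le e^{-(1-O(\delta))kd/n}$, which follows in one line from $P_k\le 1-\Pr(|D\cap S|=1)$ and $\Pr(|D\cap S|=1)=k\,\tfrac dn\prod_{j=0}^{k-2}\tfrac{n-d-j}{n-1-j}\ge(1-O(\delta))\,\tfrac{kd}{n}$, or alternatively from your own pairing argument (this is exactly \cref{lem:hypergeometric} with $q=2$). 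With that substitution your proof is complete and yields the claimed $n^{-\Omega(1)}$ bound, the dominant contribution coming from $k=1$ just as in the paper.
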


\begin{proof}[Proof of \cref{lem:comb-normal-hard}]
As in \cref{lem:ber-normal}, it suffices to work over $\ZZ_{2}$.
Let $C_{1},\dots,C_{n}$ be the columns of $Q$, consider any $v\in\ZZ_{2}^{n}$
with $|\supp(v)|=s$, and let $\mathcal{E}_v$
be the event that $C_{i}\cdot v\equiv 0\pmod{2}$ for $i=1,\dots,n$. Note that $\mathcal{E}_v$
only depends on the submatrix $Q_v$ of $Q$ containing only those
rows $j$ with $v_{j}=1$ (and $\mathcal{E}_v$ is precisely the event that every column of $Q_v$ has an even sum).

Let $p=d/n\le 1/2$, let $M_v$ be a random $s\times n$ matrix with i.i.d.\ $\Ber(p)$
entries, and let $\mathcal{E}_v'$
be the event that every column in $M_v$ has an even sum. Note that $M_v$
is very similar to $Q_v$, so the probability of $\mathcal E_v$ is very similar to the probability of $\mathcal E_v'$. Indeed, writing $R_{1},\dots,R_{s}$ and $R_{1}',\dots,R_{s}'$
for the rows of $Q_v$ and $M_v$ respectively, and writing $s_j=|\supp(R_j')|$, for each $j$ we have $s_j\sim \Bin(n,p)$, so an elementary computation using Stirling's formula shows that $\Pr(s_j=d)=\Omega(1/\sqrt{d})=e^{-O(\log d)}$. Hence
\[
\Pr(\mathcal{E}_v)=\Pr(\mathcal{E}_v'\,|\,s_j=d\text{ for all }j)\le\Pr(\mathcal{E}_v')/\Pr(s_j=d\text{ for all }j)=e^{O(s\log d )}\Pr(\mathcal{E}_v')=e^{O(s\log (pn))}\Pr(\mathcal{E}_v').
\]
Recalling the quantity $P_{s,p}$ from the proof of \cref{lem:ber-normal},
we have
\[
\Pr(\mathcal{E}_v')=P_{s,p}^{n}=\begin{cases}
e^{-\left(1+o(1)\right)spn} & \text{if }sp=o(1),\\
e^{-\Omega(n)} & \text{if }sp=\Omega(1),
\end{cases}
\]
so if $s\le cn/\log d=cn/\log(pn)$ for small $c>0$, then we also have
\[
\Pr(\mathcal{E}_v)\leq \begin{cases}
e^{-\left(1+o(1)\right)spn} & \text{if }sp=o(1),\\
e^{-\Omega(n)} & \text{if }sp=\Omega(1).
\end{cases}
\]
Let $P_s=\Pr(\mathcal{E}_v)$ (which only depends on $s$). We can now conclude the proof in exactly the same way as in \cref{lem:ber-normal}. Taking $r=\delta/p$ for sufficiently small $\delta$ (relative to
$\varepsilon$), the probability that there exists nonzero $v\in\ZZ_{2}^n$
with $|\supp(v)|<cn/\log d$ and $C_{i}\cdot v\equiv 0\pmod{2}$
for all $i=1,\dots,n$ is at most 
\begin{align*}
\sum_{s=1}^{cn/\log d}\binom{n}{s}P_{s} & \le\sum_{s=1}^{r}e^{s\log n-(1-\varepsilon/3)snp}+\sum_{s=r+1}^{cn/\log d}e^{s(\log(n/s)+1)-\Omega(n)}\\
 & \le\sum_{s=1}^{\infty}n^{-s\varepsilon/3}+\sum_{s=1}^{cn/\log d}e^{n\left((s/n)(\log(n/s)+1)-\Omega(1)\right)}=o(1),
\end{align*}
provided $c$ is sufficiently small (relative to $\delta$).
\end{proof}

We will deduce \cref{lem:comb-normal} from the following lemma.

\begin{lem}
\label{lem:hypergeometric}Suppose $p\le1/2$ and $pn\to\infty$, and let $\gamma\in\{ 0,1\} ^{n}$ be
a random vector with exactly $pn$ ones. Let $q\geq 2$ be an integer and consider a (non-random) vector $v\in\ZZ_{q}^{n}$
whose largest fibre has size $n-s$. Then for any $a\in \ZZ_q$ we have $\Pr(v\cdot\gamma\equiv a \pmod{q})\le P_{p,n,s}$
for some $P_{p,n,s}$ (only depending on $p$, $n$ and $s$) satisfying
\[
P_{p,n,s}=\begin{cases}
e^{-\Omega(1)} & \text{when }sp=\Omega(1),\\
e^{-\left(1-o(1)\right)sp} & \text{when }sp=o(1)
\end{cases}
\]
\end{lem}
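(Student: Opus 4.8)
The plan is to reduce the hypergeometric (combinatorial) setting to the i.i.d.\ Bernoulli setting, just as in the proof of \cref{lem:comb-normal-hard}, and then to handle the Bernoulli case by a direct Fourier/character computation over $\ZZ_q$. First I would reduce to $\ZZ_p$-analogues we already understand: let $M$ be a random $1\times n$ Bernoulli row with i.i.d.\ $\Ber(p)$ entries, and observe that the distribution of $\gamma$ is exactly that of $M$ conditioned on having exactly $pn$ ones. Writing $s'=|\supp(M)|\sim\Bin(n,p)$, Stirling's formula (as used already in \cref{lem:comb-normal-hard}) gives $\Pr(s'=pn)=\Omega(1/\sqrt{pn})$, so
\[
\Pr(v\cdot\gamma\equiv a)\;\le\;\frac{\Pr(v\cdot M\equiv a\pmod q)}{\Pr(s'=pn)}\;=\;O\!\left(\sqrt{pn}\right)\cdot\Pr(v\cdot M\equiv a\pmod q).
\]
Thus it suffices to show $\Pr(v\cdot M\equiv a\pmod q)=O(\sqrt{pn})^{-1}\cdot P_{p,n,s}$ with $P_{p,n,s}$ as claimed — i.e.\ we want a bound on the Bernoulli anticoncentration over $\ZZ_q$ that is a factor $\sqrt{pn}$ stronger than what we need at the end.

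The second step is the Fourier estimate for the Bernoulli row. Fix a nontrivial character, i.e.\ a primitive $q$-th root of unity $\omega=e^{2\pi i/q}$. For each $b\in\ZZ_q$,
\[
\Pr(v\cdot M\equiv b\pmod q)=\frac1q\sum_{j=0}^{q-1}\omega^{-jb}\,\E\!\left[\omega^{j(v\cdot M)}\right]=\frac1q\sum_{j=0}^{q-1}\omega^{-jb}\prod_{k=1}^{n}\bigl(1-p+p\,\omega^{jv_k}\bigr).
\]
The $j=0$ term contributes $1/q$. For $j\neq 0$ and any index $k$ with $v_k$ in a non-largest fibre, $|1-p+p\omega^{jv_k}|\le 1-cp$ for an absolute constant $c>0$ (since $\omega^{jv_k}\neq 1$ lies at distance $\Omega(1)$ from $1$ on the unit circle; here I use $q$ fixed, or at least that $q$ is bounded, which matches the application where $q\in\{2,d-1\}$ — wait, $d-1$ is not bounded, so more care is needed: one uses instead that $|1-p+p\omega^{\theta}|\le e^{-cp|1-\omega^\theta|^2}\le e^{-cp\cdot(\text{dist from }1)^2}$ and sums the contributions; the key point is that among the coordinates $k$ with $v_k$ outside the largest fibre, the values $v_k$ need not avoid $1$, so one instead groups coordinates by fibre and uses that at least $s$ coordinates lie outside the largest fibre, giving $\prod_k|1-p+p\omega^{jv_k}|\le (1-cp)^{?}$ only when those $v_k$ are bounded away from $0$ mod $q$, which fails if e.g.\ a fibre has value differing from the largest by a multiple of $q$). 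To avoid this subtlety I would instead only use the two largest fibres: there exist two fibre values $\alpha\ne\beta$ such that the fibre of $\beta$ has size $\ge s/ (q-1)$ — no, cleaner: pick the largest fibre (value $\alpha$, size $n-s$) and note the remaining $s$ coordinates take values $\ne\alpha$; after subtracting the constant $\alpha\one$ from $v$ (which only shifts $a$) we may assume the largest fibre is the zero fibre, so $s$ coordinates are nonzero in $\ZZ_q$, and for those $|1-p+p\omega^{jv_k}|=|1-p+p\omega^{j v_k}|\le e^{-c_q p}$ with $c_q=c_q(j,v_k)>0$ as long as $jv_k\not\equiv0$. The genuinely nonzero coordinates can still have $jv_k\equiv0$ for some $j$, so the product over $k$ is only small for those $j$ with $\gcd(j,q)$ not killing all the $v_k$'s; one shows that summing over all $j\ne0$ of $\prod_k|1-p+p\omega^{jv_k}|$ is at most $(q-1)\max_{j\ne0}(\text{something})$, and the maximum is $e^{-\Omega(sp)}$ when $sp=\Omega(1)$ and $1-(1-o(1))\,c\,sp$ when $sp=o(1)$, using $\prod(1-x_k)\le \exp(-\sum x_k)$ and $\log(1/|1-p+p\omega^\theta|)=\Theta(p|\theta|^2)$ for small $p$.

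The final step is bookkeeping: combining the Fourier bound $\Pr(v\cdot M\equiv a)\le \frac1q+\frac{q-1}{q}\,e^{-\Omega(sp)}$ (in the $sp=\Omega(1)$ regime this is $e^{-\Omega(1)}$ after absorbing the $\frac1q\ge$ constant, which is fine since we only claim $e^{-\Omega(1)}$ there) with the reduction factor $O(\sqrt{pn})$, and noting $\sqrt{pn}\,e^{-\Omega(sp)}=e^{-\Omega(1)}$ still holds when $sp$ is a large enough constant times $\log(pn)$ — hmm, here there is a real gap: when $sp=\Omega(1)$ but $sp\ll\log(pn)$, the factor $\sqrt{pn}$ is not absorbed. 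I expect \textbf{this regime-matching to be the main obstacle}: one must show the Bernoulli anticoncentration over $\ZZ_q$ is actually $e^{-\Omega(sp)}/\text{poly}$ only once $sp$ is large, so to get $P_{p,n,s}=e^{-\Omega(1)}$ for \emph{all} $sp=\Omega(1)$ after dividing by $\Pr(s'=pn)$, one instead avoids the conditioning reduction for small $sp$ and argues directly on $\gamma$: when $sp=o(1)$, with probability $1-O(sp)$ none of the $pn$ ones of $\gamma$ lands outside the largest fibre, in which case $v\cdot\gamma$ is a fixed constant, so $\Pr(v\cdot\gamma\equiv a)\le (1-O(sp))\cdot\one[\text{const}\equiv a]+O(sp)\le e^{-(1-o(1))sp}$ after checking the constant can match $a$ for at most — wait, it can match, so one needs the complementary event to still have probability $\le$ something; the clean statement is $\Pr(v\cdot\gamma\equiv a)\le \max(\Pr(\text{at least one }1\text{ outside largest fibre}),\ \Pr(\text{all }1\text{'s inside}))$, and the latter is $\le 1$, so this only gives the bound when $a$ is \emph{not} the constant value. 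To fix it, note that if all $pn$ ones land inside the largest fibre then $v\cdot\gamma$ equals $\alpha\cdot pn$ deterministically, and conditioned on \emph{at least one} one landing outside, a further Fourier/symmetrisation argument (reveal all but one outside-coordinate, use that the last coordinate is uniform over $\ge 2$ cosets) shows $\Pr(v\cdot\gamma\equiv a\mid \text{something outside})\le 1-\Omega(1)$, so overall $\Pr(v\cdot\gamma\equiv a)\le 1-\Omega(1)\cdot\Pr(\text{something outside})=1-\Omega(\min(sp,1))$, which is $\le e^{-\Omega(sp)}$ when $sp=o(1)$ and $\le e^{-\Omega(1)}$ when $sp=\Omega(1)$, exactly as claimed. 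So the real structure of the proof is: (i) the easy regime $sp=o(1)$ by the ``all ones avoid the big fibre'' event plus a one-coordinate flip; (ii) the regime $sp=\Omega(1)$ by the Bernoulli reduction plus the Fourier bound, where the $\sqrt{pn}$ loss is affordable because we only need $e^{-\Omega(1)}$. The main thing to get right is the constant-bending in regime (i) and ensuring the Fourier bound in regime (ii) genuinely beats $\sqrt{pn}$, for which it suffices that $sp$ exceeding a large absolute constant already forces $\Pr(v\cdot M\equiv a)\le e^{-csp}$ with $c$ large enough that $\sqrt{pn}\,e^{-csp}=e^{-\Omega(1)}$ — but this last inequality fails for $sp$ a small constant and $pn$ large, so one must patch the window $1\lesssim sp\lesssim \log(pn)$ using regime-(i)-style reasoning as well (with probability bounded away from $1$ the ones avoid the big fibre is false, etc.), and it is precisely this patching that I expect to be the fiddly heart of the argument.
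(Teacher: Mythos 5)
Your final, patched-up argument does not prove the lemma as stated, and the shortfall is precisely in the regime that matters. In the case $sp=o(1)$ you conclude with $\Pr(v\cdot\gamma\equiv a)\le 1-\Omega(1)\cdot\Pr(\text{some one of }\gamma\text{ lies outside the largest fibre})=1-\Omega(sp)$, i.e.\ $e^{-\Omega(sp)}$ with an unspecified constant in the exponent, and you assert this is ``exactly as claimed''. It is not: the lemma claims $e^{-(1-o(1))sp}$, with constant $1$, and this sharp constant is essential. In the application (\cref{lem:comb-normal}) the bound is raised to the power $n-1$ and summed over roughly $\binom{n}{s}q^{s+1}$ vectors with $pn$ as small as $(1+\eps)\log n$; a bound $P_{p,n,s}\le e^{-c\,sp}$ with a constant $c<1/(1+\eps)$ makes the term $e^{s\log n-c\,spn}$ blow up, so an unspecified $\Omega(1)$ constant (or even $c=1/2$) is useless for small $\eps$. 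Moreover your route cannot recover the constant: the event ``at least one of the $pn$ ones lands outside the largest fibre'' has probability only $(1+o(1))sp$, and the conditional probability of hitting $a$ given that event is in general \emph{not} $o(1)$ --- for $q=2$ and $v$ the indicator of a set of size $s$, conditioned on a one landing in that set the sum is odd with probability $1-o(1)$ --- so even if your conditional step were justified it could give at best $1/2$, hence only $1-(1/2+o(1))sp$ overall. (The conditional step itself is also not justified as written: under the hypergeometric measure, conditioning on ``at least one one outside'' destroys independence, and ``reveal all but one outside-coordinate'' is not a well-defined uniform-coset argument.)

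The missing idea is the paper's pairing trick, which is what manufactures the constant $1$: realise $\gamma$ by first choosing $pn$ disjoint random pairs $(i_k,j_k)$ and then picking one element of each pair to be a one. Conditioned on some pair being discordant (i.e.\ $v_{i_k}\ne v_{j_k}$ in $\ZZ_q$), revealing all the other choices shows the conditional probability of hitting $a$ is at most $1/2$; and the discordance event has probability at least $(2-o(1))sp$ when $sp=o(1)$, because \emph{either} endpoint of a pair may be the coordinate outside the largest fibre --- this factor $2$ exactly cancels the loss of $1/2$, giving $1-(1-o(1))sp\le e^{-(1-o(1))sp}$. Your earlier Fourier branch has its own unresolved problem for composite $q$ (for $j\ne 0$ with $jv_k\equiv 0\pmod q$ for all $k$ the product of characteristic-function factors equals $1$, so the bound by $(q-1)\max_{j\ne0}(\cdots)$ fails), but since you ultimately abandon it for the critical regime, the decisive gap is the lost constant in the exponent described above.
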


\begin{proof}
As in the proof of \cref{lem:comb-LO}, we realise the distribution of $\gamma$ by first choosing $pn$ random disjoint pairs $(i_{1},j_{1}),\dots,(i_{pn},j_{pn})\in\{ 1,\dots,n\} ^{2}$,
and then randomly choosing one element from each pair to comprise the 1-entries of $\gamma$.

Let $\mathcal{E}$ be the event that $v_{i}\ne v_{j}$ for at least
one of our random pairs $(i,j)$. Then $\Pr(v\cdot\gamma\equiv a \pmod {q}\,|\,\mathcal{E})\le1/2$, and therefore $\Pr(v\cdot \gamma \equiv a \pmod q ) \leq 1 - \Pr(\mathcal E)/2$.
So, it actually suffices to prove that
\[
\Pr(\mathcal{E})\ge\begin{cases}
\Omega(1) & \text{when }sp=\Omega(1),\\
\left(2-o(1)\right)sp & \text{when }sp=o(1).
\end{cases}
\]
If $s\ge n/3$ (this can only occur if $sp=\Omega(1)$),
then we can choose $J\su \{1,\dots,n\}$ to be a union of fibres of the vector $v\in \ZZ_q^n$ such that $n/3\le|J|\le2n/3$.
In this case, 
\[
\Pr(\mathcal{E})\ge\Pr(i_{1}\in J,\,j_{1}\notin J)=\Omega(1),
\]
as desired. So, we assume $s<n/3$, and let $I\su \{1,\dots,n\}$ be the set of indices in the largest fibre of $v$ (so $|I|=n-s$).
Note that $\mathcal{E}$ occurs whenever there is a pair $\{ i_{k},j_{k}\} $
with exactly one element in $I$.

Let $\mathcal{F}$ be the event that $i_{k}\in I$ for all $k=1,\dots,pn$. We have
\[
\Pr(\mathcal{E}\,|\,\mathcal{F})\ge1-(1-s/n)^{pn}=\begin{cases}
\Omega(1) & \text{when }sp=\Omega(1),\\
\left(1-o(1)\right)sp & \text{when }sp=o(1),
\end{cases}
\]
and
\[
\Pr(\mathcal{E}\,|\,\overline{\mathcal{F}})\ge(n-s-pn)/(n-pn)=\begin{cases}
\Omega(1) & \text{when }sp=\Omega(1),\\
1-o(1) & \text{when }sp=o(1).
\end{cases}
\]
This already implies that if $sp=\Omega(1)$, then $\Pr(\mathcal{E})=\Omega(1)$
as desired. If $sp=o(1)$ then $\Pr(\mathcal{F})\le(1-s/n)^{pn}=1-\left(1+o(1)\right)sp$,
so
\[
\Pr(\mathcal{E})=\Pr(\mathcal{F})\Pr(\mathcal{E}\,|\,\mathcal{F})+\Pr(\overline{\mathcal{F}})\Pr(\mathcal{E}\,|\,\mathcal{\overline{\mathcal{F}}})\geq \left(2-o(1)\right)sp,
\]
as desired.
\end{proof}
\begin{proof}[Proof of \cref{lem:comb-normal}]
Let $q=d-1$. It suffices to prove that with probability $1-o(1)$ there is no nonconstant ``bad'' vector
$v\in\ZZ_{q}^n$ whose largest fibre has size at least $(1-c/\log q)n$
and which satisfies $R_{i}\cdot v\equiv 0\pmod{q}$ for all $i=1,\dots,n-1$. (Note that by the choice of $q$, if $v\in \ZZ_{q}^n$ is constant and nonzero, then it is impossible to have $v\cdot R_1=0$).

Let $p=d/n$, consider any $v\in\ZZ_{q}^n$ whose largest fibre has size $n-s$, and
consider any $i\in \{1,\dots,n-1\}$. Then $R_{i}\cdot v$ is of the form in \cref{lem:hypergeometric},
so taking $r=\delta/p$ for sufficiently small $\delta$ (relative to $\varepsilon$), the probability
that such a bad vector exists is at most 
\begin{align*}
\sum_{s=1}^{c'n/\log q}\binom{n}{s}q^{s+1}P_{p,n,s}^{n-1} & \le\sum_{s=1}^{r}e^{s\log n+(s+1)2\sqrt{pn}-(1-\varepsilon/3)spn}+\sum_{s=r+1}^{c'n/\log q}e^{s(\log(n/s)+1)+cn+2\sqrt{pn}-\Omega(n)}\\
 & \le\sum_{s=1}^{\infty}n^{-s\varepsilon/3}+\sum_{s=1}^{c'n/\log q}e^{n\left((s/n)(\log(n/s)+1)-\Omega(1)\right)}=n^{-\Omega(1)},
\end{align*}
provided $c'>0$ is sufficiently small (relative to $\delta$) and $n$ is sufficiently large.
\end{proof}


\end{document}